\newtheorem{theorem}{\textbf{ Theorem} }
\newtheorem{lemma}{\textbf{     Lemma } }
\newtheorem{definition}{\textbf{Definition} }
\newtheorem{remark}{\textbf{Remark} }
\begin{document}

\title{The Z-eigenpairs  of  orthogonally    diagonalizable  symmetric  tensors
\tnoteref{mytitlenote}}
\tnotetext[mytitlenote]{This research did not receive any specific grant from funding agencies in the public, commercial, or
not-for-profit sectors.}

	\author[add1,add2,add3]{Lei  Wang}

\author[add1,add2,add3]{Xiurui Geng\corref{coraut}}
\cortext[coraut]{Corresponding author}
\ead{gengxr@sina.com.cn}

\address[add1]{Aerospace Information Research Institute, Chinese Academy of Sciences, Beijing 100094, China}
\address[add2]{University of the Chinese Academy of Sciences, Beijing 100049, China} 
\address[add3]{Key Laboratory of  Technology in Geo-Spatial Information Processing and Application System, Chinese Academy of Science, Beijing 100190, China}
	
\begin{abstract} 
	In  this paper,   we  focus   on  a special  class    of  symmetric  tensors,  which can be    orthogonally  diagonalizable,  and     investigate     their   Z-eigenpairs  problem.   We show   that  the  eigenpairs  can be  uniformly   expressed  using  several   basic  eigenpairs,  and  the number of  all  the   eigenpairs   is  
	uniquely  determined  by   the  order  and  rank  of  the   symmetric  tensor.  In  addition, we    exploit  the  local  optimality  of each  eigenpair  by  checking     the second-order  necessary  condition.  
\end{abstract}

\begin{keyword}
	Z-eigenpairs   \sep      orthogonally    diagonalizable \sep       symmetric  tensors \sep           projected  Hessian \sep       local  optimality
\end{keyword}
\maketitle
\textbf{AMS subject classifications. 15A69}

\section{Introduction }

As  the high-order  generalization of  matrix,  tensor   analysis and  applications have  been  given   more   attentions in  recent  years \cite{kolda,2007Numerical}.   
Many  concepts  have   been  naturally   extended  from    matrices  to  tensors,  such  as  the  inner  product of   the tensor\cite{kolda},
  tensor norm\cite{Tnorm},
 tensor  rank\cite{tensorrank,kolda,tensorrank_qi},
high-order Sigular value decomposition (HOSVD)\cite{hosvd}, etc.   
However,  it  has  also  been   analyzed  that  most of tensor problems are NP hard \cite{NP-hard},    one of which   is  to  obtain  all   the  eigenpairs  of symmetric  tensors\cite{qi,lim}.
Different  from  the matrix  case,    there are several  definitions for eigenpairs  of symmetric tensors, such as   D-eigenpairs\cite{Deigen}, H-eigenpairs\cite{qi}, Z-eigenpairs\cite{qi},   etc.  
 In this paper,  we mainly  focus  on  one of them --- E-eigenpairs  and when  the  corresponding  eigenvector  is  real, it  is  also  called Z-eigenpairs.

There  are  many  algorithms  and   numerous  applications  that  have  been  investigated  concerning  this  subject.
  see for example  \cite{SHOPM,ASHOPM,PSA, NPSA,NCM,OTD, Cuicf,hm,Zglobal}.   
In this  paper,  we mainly  investigate the  Z-eigenpairs  problem    of      a  special  class  of  symmetric  tensors, which  is  orthogonally   diagonalizable.
We   first  prove  that   the   eigenpairs of  such  a  type of  tensors   can  be  enumerated  in  a linear-combination   way     using     several  basic  eigenvectors,  and  the  number  of eigenpairs  can   be  uniquely   determined  by  the  order  and  rank  of   the  symmetric  tensor.
In  addition,     the   local  optimality  of   each  eigenpair   is   also   analyzed  by checking  the  second-order  necessary   condition.

\section{ Preliminaries}
We  start  by   defining  some  notations.
Let    $ \mathbb  C $  and $ \mathbb  R $    be  the  complex and  real  field.
High-order   tensors are denoted
boldface Euler script letters, e.g.,
$\mathcal  A $.
An
$d$th-order tensor is denoted 
$\mathcal A \in \mathbb {R}^{I_1 \times I_2  \times \dots \times I_{d} }$,
where 
$d$
is the order (or  the  way, or  the mode) of
$\mathcal A $, and 
$ I_j $ ($  j \in \{ 1,2,\dots,d \}$)  is  the  dimension  of  
$j$th-mode.
 The element of $\mathcal A$,    which  is  indexed  by integer tuples $(i_1,i_2,\dots,i_d) $, is denoted 
($a_{i_1,i_2,\dots,i_d})_
{1 \le i_1 \le I_1, 
	1 \le i_2 \le I_2,
	\dots, 
	1 \le i_d \le I_d}$. 
When  $d=2$,     a  tensor   reduces  to  a  matrix.    When  $d=1$,      it  is  a  vector. 
 The $j$-mode product of a tensor
$\mathcal A \in \mathbb {R}^{I_1 \times I_2 \times \dots \times I_d } $
with a matrix
$\mathbf B \in \mathbb {R}^{J \times I_j}$
is   denoted by
$\mathcal A 
\times_{j} 
\mathbf B 
\in \mathbb {R}^{I_1 \times \dots \times I_{n-1} \times J \times I_{n+1}\times \dots \times I_d } $,
whose element is
\begin{align}
(\mathcal A \times_{j} \mathbf B )_{i_1\dots i_{n-1}ji_{n+1}\dots i_d}=\sum_{i_{j}=1} ^{I_{j}} a_{i_1,i_2,\dots,i_d} b_{ji_{n}} .
\end{align}
For   different   modes   in  series  of   multiplications,   the  order   is   cummutable\cite{kolda},  i.e.,  
\begin{equation} \label{tensormodemn}
\mathcal A 
\times_{j} 
\mathbf B 
\times_{k} 
\mathbf C 
= 
\mathcal A 
\times_{k} 
\mathbf C 
\times_{j}  
\mathbf B
\quad 
(j \neq k). 
\end{equation}
If   the  modes  are  the  same,  it  holds  that
\begin{equation} \label{tensormodenn}
\mathcal A 
\times_{j} 
\mathbf B 
\times_{j} 
\mathbf C 
 = 
 \mathcal A \times_{j} ( \mathbf C\mathbf B).
\end{equation}
Given  a    $d$th-order  tensor  
$  \mathcal A   $    and   a  series  of     matrices  
$\mathbf B^{(i) }  (i=1,2\dots, d)$,  
it is   simply  denoted  
\begin{align}
 \mathcal A  
 \times_{1 }    \mathbf B^{(1) }    
  \times_{2 }    \mathbf B^{(2) }  
  \times_{3 }     
  \dots
\times_{d }
\mathbf B^{(d) }
=
[\mathcal A;   
\mathbf B^{(1) },     
\mathbf B^{(2) }, 
\dots,
\mathbf B^{(d) }],
\end{align}   which  is   a     notation   introduced by  Kolda  in  \cite{kolda}.

A   tensor is called    symmetric (or supersymmetric) if its elements remain invariant under any permutation of the indices $(i_1,i_2,\dots,i_d) $\cite{kolda}.  A   symmetric  tensor of order
$ m $
and dimension
$ n$
is denoted 
$\mathcal S \in \mathbb R^{n \times n  \times \dots \times n } $,
whose element is
 \begin{equation} 
\mathcal S = (
s_{i_1,i_2,\dots,i_m})_
{ 1 \le i_1 \le n, 
	1 \le i_2 \le n,
	\dots, 
	1 \le i_m \le n},
 i_j \in \{{1,2,\dots,n}\}, j=1,2,\dots,m.
 \end{equation} 
Let    $  T^{m}(\mathbb R^{n}) $  denote  the  space  of  all  such  real  symmetric    tensors.
Given   a   vector  $  \mathbf u \in  \mathbb  C^{n} $  and   a    symmetric  tensor 
$\mathcal S   \in   T^{m}(\mathbb R^{n})$,  a  series  of  multiplication  along  different   modes   can   be   simply  denoted   as  follows:
\begin{equation}\label{sum_simple}
 \mathcal S  
\times_{1 }    \mathbf u^{\mathrm T} 
\times_{2 }    \mathbf u^{\mathrm T} 
\times_{3 }     
\dots
\times_{m }
 \mathbf u^{\mathrm T}
 =
\mathcal S \mathbf u^{m} =
\sum\limits_{i_1,i_2,\dots,i_m=1}^{n} 
s_{i_1,i_2,\dots,i_m}  u_{i_1} \dots   u_{i_{m}}.
\end{equation}
And  in  a  similar  way,   $   \mathcal S \mathbf u^{m-1}   $   denotes   an    $n$-dimensional
column   
vector,  whose  $j$th  element   is    
\begin{equation}\label{sum1}
(\mathcal S \mathbf u^{m-1})_{j} =
\sum\limits_{i_2,\dots,i_m =1}^{n} 
s_{j,i_2,\dots,i_m}  u_{i_2} \dots   u_{i_{m}}.
\end{equation}

Furthermore,   $   \mathcal S \mathbf u^{m-2}   $   is  an    $n  \times  n $  matrix,    whose  $(i, j )$th  element   is  
\begin{equation}\label{sum2}
(\mathcal S \mathbf u^{m-2})_{i, j} =
\sum\limits_{i_3,\dots,i_m =1}^{n} 
s_{i,j,i_3,\dots,i_m}  u_{i_3} \dots   u_{i_{m}}.
\end{equation}

 Given   a  matrix  
$ \mathbf A  \in \mathbb {R}^{n \times k} $   and    assuming  that  
$ \mathbf A $  is  full   column  rank ($  k  \le  n$), 
the orthogonal  complement   projection matrix of  $ \mathbf A $ 
is  denoted  
$ \mathbf {P}^{\bot}_{  \mathbf A } 
= 
\mathbf I_{n}-\mathbf A (\mathbf A^{\mathrm T} \mathbf A)^{-1} \mathbf A^{\mathrm T}$, 
where  we  use    $ \mathbf I_{n} $   to  denote    an $ n  \times  n $   identity matrix.

Given   $d$  vectors 
$ \mathbf a^{(i) } \in \mathbb {R}^{I_i \times 1}$ 
($i=1,2, \dots, d$),
their   outer  produce   
$ \mathbf a^{(1) }
\circ
\mathbf a^{(2) }
\circ  \dots
\circ
\mathbf a^{(d) } 
$  
is   a    $ d$th-order  tensor,
 denoted  $ \mathcal X  \in \mathbb {R}^{I_1 \times I_2  \times \dots \times I_d } $ ,
 whose   element is    the  product  of    the  corresponding  vector   element:
\begin{align} 
x_{i_1,i_2,\dots,i_d}
= 
\mathbf a^{(1) }_{i_1}
\mathbf a^{(2) }_{i_2}
  \dots
\mathbf a^{(d) }_{i_d} 
,
{1 \le i_1 \le I_1, 
	1 \le i_2 \le I_2,
	\dots, 
	1 \le i_d \le I_d}.
\end{align} 
The  tensor  $ \mathcal X$  is  called  to  be   rank-one    if  it  can  rewritten  as  the  outer  product  of   $d$  vectors.
When 
$  \mathbf a^{(1) }
=  
\mathbf a^{(2) }
=   \dots
= 
\mathbf a^{(d) }
=\mathbf a $ $(I_1 =  I_2  = \dots = I_d)$,  we use  the  notation 
$ \mathcal X =  \mathbf a^{\circ d}$  for  simplicity,   where 
$   \mathcal    X $ is  a  symmetric  tensor  of  order  $d$  and  dimension  $ I$.  
Let  $ \mathbb  A $   be  the  set  of   $ k$  $(1 \le  k  \le  r)$  integers  randomly   selected  from   the  set    of   $r$    integers  $ \{  1,2,\dots, r \} $.
$\vert    \mathbb  A  \vert  =k     $  denotes  the number  of  the elements  in  $ \mathbb  A $.

In  this paper,   the  following  optimization  model is   considered:
\begin{equation}\label{opti_ori}
\begin{cases}
\max\limits_{\mathbf u} \quad \mathcal S \mathbf u^{m}   \\
\rm s.t. \quad \mathbf u^{\mathrm {T}}\mathbf u=1
\end{cases}.
\end{equation}
The Lagrangian function 
of (\ref{opti_ori})  is  defined as:
\begin{equation}\label{Lagrangian_function}
\rm L(\mathbf u, \lambda)=
\frac {1}  {\it m}
\mathcal S \mathbf u^{\it m}-
\frac { \lambda} {2} (\mathbf u^{\mathrm {T}}\mathbf u-1).
\end{equation}
When the gradient of
$ \rm L(\mathbf u, \lambda) $ to
$ \mathbf  u $ is  $ \mathbf 0$,    the eigenpair of a  symmetric  tensor can  be  deduced,  which  was  independently    defined  by   Lim  and  Qi  in  2005:

\begin{definition}\cite{qi,lim}
	Given a  tensor $\mathcal S   \in    T^{m}(\mathbb R^{n}) $,
	a pair
	$(\lambda ,\mathbf u )$
	is an Z-eigenpair  of  
	$\mathcal S  $ 
	if
	\begin{equation}\label{definition}
	\mathcal S \mathbf u^{m-1}=\lambda \mathbf u,
	\end{equation}
	where
	$ \lambda  \in  \mathbb C $
	is  the  eigenvalue and
	$ \mathbf u  \in   \mathbb R^{n \times  1} $
	is the  corresponding   eigenvector   satisifying 
	$\mathbf u^{\mathrm {T}}\mathbf u=1 $.
\end{definition}

Assuming    that     $ (\lambda,  \mathbf  u)  $ is     an  Z-eigenpair  of   $\mathcal S   \in    T^{m}(\mathbb R^{n}) $,  and  it  is   easily   checked   that 
so  is   
$ (	\lambda' =  t^{m-2}\lambda,
\mathbf  u' =  t \mathbf  u)
$  
for  
$  	t \in  \mathbb  C \backslash \{0 \}$.  
This   means   that   the  solution   of  
(\ref{definition}) 
consists
of different equivalence classes.     Such  an   equivalence  class      is   denoted  as   follows:

\begin{definition}\label{euqclass}
	Let  $ (\lambda,  \mathbf  u)  $ be  an  Z-eigenpair  of   $\mathcal S   \in    T^{m}(\mathbb R^{n}) $,     
	the   equivalence  class   of     $ (\lambda,  \mathbf  u)  $   is  denoted   
	\begin{equation}\label{equclass}
	[  
	(\lambda,  \mathbf  u) ]: =
	\{
	(\lambda',  \mathbf  u')  \vert 
	\lambda' =  t^{m-2}\lambda  ,
	\mathbf  u' =  t \mathbf  u ,
	t \in  \mathbb  C \backslash \{0 \}
	\}.
	\end{equation}
\end{definition}

Assume that
(\ref{opti_ori})  or   (\ref{definition}) 
has 
finite solutions. 
Then, 
the  following  theorem  provides  a  theoretical  upper  bound  for  the number of  	Z-eigenpairs:
\begin{theorem}\cite{upperbound}
	If a tensor $\mathcal S   \in    T^{m}(\mathbb R^{n})  $  has finitely many equivalence classes of z- Z-eigenpairs over 
	$ \mathbb  C$,   then their number,
	counted with multiplicity,     is   bounded  by  
	\begin{equation}\label{at_most}
	M(m,n)=
	\frac {(m-1)^{n}-1}  {m-2}.
	\end{equation}
\end{theorem}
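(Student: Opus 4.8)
\; The plan is to recast the eigenpair equation (\ref{definition}) as an intersection problem on $\mathbb P^{n-1}$ and read the bound $M(m,n)$ off a Chern-class computation. First I would eliminate the eigenvalue: up to the equivalence of Definition~\ref{euqclass}, a (non-isotropic) Z-eigenpair $[(\lambda,\mathbf u)]$ over $\mathbb C\setminus\{0\}$ is determined by the point $[\mathbf u]\in\mathbb P^{n-1}$, and $[\mathbf u]$ is an eigenvector exactly when $\mathbf u$ and $\mathcal S\mathbf u^{m-1}$ are linearly dependent, i.e.\ the $n\times 2$ matrix $\bigl[\,\mathbf u\ \big|\ \mathcal S\mathbf u^{m-1}\,\bigr]$ has rank at most one. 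Its $\binom n2$ maximal minors $u_i(\mathcal S\mathbf u^{m-1})_j-u_j(\mathcal S\mathbf u^{m-1})_i$ are homogeneous of degree $m$ in $\mathbf u$, so the eigenvector locus is a well-defined closed subscheme $V\subseteq\mathbb P^{n-1}$, which is $0$-dimensional whenever (\ref{definition}) has only finitely many classes.

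Next I would realize $V$ as the zero scheme of a section of a vector bundle on $\mathbb P^{n-1}$. Let $0\to\mathcal O(-1)\to\mathcal O^{\oplus n}\to Q\to 0$ be the tautological sequence, with $Q$ the quotient bundle of rank $n-1$, so the fiber of $\mathcal O(-1)$ at $[\mathbf u]$ is the line $\mathbb C\mathbf u$. The assignment $\mathbf u\mapsto\mathcal S\mathbf u^{m-1}$ is homogeneous of degree $m-1$, hence is a global section of $\mathcal O(m-1)^{\oplus n}$; composing with the surjection $\mathcal O(m-1)^{\oplus n}\to Q(m-1)$ produces a section $s\in H^0\!\bigl(\mathbb P^{n-1},Q(m-1)\bigr)$ whose zero scheme is exactly the locus where $\mathcal S\mathbf u^{m-1}$ lies in $\mathbb C\mathbf u$, i.e.\ $Z(s)=V$. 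Since $\operatorname{rank}Q(m-1)=n-1=\dim\mathbb P^{n-1}$ and $Z(s)$ is $0$-dimensional, its length equals the degree of the top Chern class $c_{n-1}\bigl(Q(m-1)\bigr)$, and this length bounds (with equality for generic $\mathcal S$) the number of equivalence classes of eigenpairs counted with multiplicity.

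Then I would compute this Chern number. From the tautological sequence, $c(Q)=c(\mathcal O(-1))^{-1}=(1-h)^{-1}=1+h+h^2+\cdots$, where $h$ is the hyperplane class; and for a rank-$r$ bundle $E$ and a line bundle $L$ one has $c_r(E\otimes L)=\sum_{i=0}^r c_i(E)\,c_1(L)^{\,r-i}$. Taking $r=n-1$, $c_i(Q)=h^i$ and $c_1(\mathcal O(m-1))=(m-1)h$ gives
\begin{equation}
c_{n-1}\bigl(Q(m-1)\bigr)=\sum_{i=0}^{n-1}h^{i}\bigl((m-1)h\bigr)^{\,n-1-i}=\Bigl(\textstyle\sum_{j=0}^{n-1}(m-1)^{j}\Bigr)h^{\,n-1},
\end{equation}
so integrating over $\mathbb P^{n-1}$ (extracting the coefficient of $h^{\,n-1}$) yields $\sum_{j=0}^{n-1}(m-1)^{j}=\dfrac{(m-1)^{n}-1}{m-2}=M(m,n)$. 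For an arbitrary $\mathcal S$ with finitely many classes this is an upper bound, either by upper-semicontinuity of the length of $Z(s)$ along a path from the generic tensor, or because any further eigenvector would force $\dim V\ge 1$.

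The main obstacle is the intersection-theoretic input: the $\binom n2$ minors are very far from a complete intersection, so a naive B\'ezout estimate is hopelessly weak, and the whole point is to recognize $V$ as a degeneracy locus and apply the Thom--Porteous formula in its simplest (zero-locus) form. A secondary point requiring care is the bookkeeping between the projective count and the statement as phrased: one must check that distinct equivalence classes over $\mathbb C\setminus\{0\}$ give distinct points of $V$, and handle eigenvectors on the isotropic quadric $\mathbf u^{\mathrm T}\mathbf u=0$, which cannot be normalized to $\mathbf u^{\mathrm T}\mathbf u=1$ and are conventionally discarded --- so the displayed quantity is a genuine upper bound --- together with the verification that $Z(s)$ is $0$-dimensional and that its length dominates the honest count whenever there are only finitely many classes.
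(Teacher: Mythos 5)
The paper offers no proof of this theorem of its own --- it is imported from Cartwright and Sturmfels \cite{upperbound} --- and your argument is essentially theirs: the eigenpoints form the zero scheme of the section of $Q(m-1)\cong T_{\mathbb P^{n-1}}(m-2)$ induced by $\mathbf u\mapsto\mathcal S\mathbf u^{m-1}$, and its top Chern number $\sum_{j=0}^{n-1}(m-1)^{j}=\frac{(m-1)^{n}-1}{m-2}=M(m,n)$ gives the count. Your computation is correct, and the two subtleties you flag (eigenvectors on the isotropic quadric $\mathbf u^{\mathrm T}\mathbf u=0$, and the possibility that $Z(s)$ has positive-dimensional components so that the isolated zeros must be bounded via global generation or semicontinuity) are precisely the points the cited source handles through its equivalence-class convention, so they need the careful treatment you indicate rather than a new idea.
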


Cartwright and Sturmfels  firstly   proves  the  above    theorem\cite{upperbound}.  
In  the  literature  of  \cite{hm},  the  authors  also  provided  another  version  of  the  proof,  which  considers   various   
types   of  tensor   eigenvalues   and    provide  an  unified   results. 

The  first-order    gradient  derivation     (\ref{definition})   can be  used  to    obtain        all   stationary  points  of  (\ref{opti_ori}).   
Whie   the  second-order  derivation  information  plays  an  important  role  in  identifying    whether  a  stationary  point  is     locally    optimal    given    an  optimization   model.      The    second-order derivation  
of
$ \rm L(\mathbf u, \lambda) $ 
to    $ \mathbf u $, i.e.,  the Hessian matrix of
(\ref{Lagrangian_function}),  is   denoted
\begin{equation}\label{hessian_matrix}
\mathbf H(\mathbf u) = (m-1)\mathcal S \mathbf u^{m-2} - \lambda \mathbf I_{n} ,
\end{equation}
where
$ \mathbf I_{n} $
is an $ n \times n $  identity matrix.
 The  following  theorem is  well  established     for   the   constrained  optimization  problem   to  identify  the  locally  optimal   solutions
(Page 332 in   \cite{Numerical}): 

\begin{theorem}[\textbf{Second-order necessary condition}]\label{second_order_necessary}\cite{Numerical}
	Suppose that
	for any  vector $ \mathbf w \in \mathbb V $,
	if 
		\begin{equation}\label{second_order}
		\mathbf w^{\mathrm T}
	\mathbf H (\mathbf u) 
	\mathbf w  
	\le 0   
		\end{equation}
	holds, then
	$\mathbf u $
	is a local maximum solution of (\ref{opti_ori}).
	And for  (\ref{opti_ori}), the set
	$\mathbb V $
	is defined as
	\begin{equation}\notag
	\mathbb V=\{
	\mathbf w \in \mathbb R^{n}
	\vert   (\triangledown  g)^{\mathrm T} \mathbf w   =0
	\}=
        	\rm Null [(\triangledown  g)^{\mathrm T} ],
	\end{equation}
	where   $ \rm Null( \mathbf A) $ 
	denotes the null space  of $\mathbf A$  and $  \triangledown  g =\mathbf u $  denotes the gradient of the constraint: $ g (\mathbf u) = 
	\mathbf u^{\mathrm T}
	\mathbf u 
	-1
	=0 $.  
	
	If a stronger condition, i.e.,
	$ \mathbf w^{\mathrm T}
	\mathbf H (\mathbf u) 
	\mathbf w  <   0 $,   
	is satisfied, then,  
	$\mathbf u$
	is a   strict     local maximum  solution of (\ref{opti_ori}).
\end{theorem}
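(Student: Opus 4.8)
The plan is to prove the statement by a second-order Taylor expansion of the objective $\mathcal S\mathbf u^{m}$ along curves constrained to the unit sphere, so that the curvature of the constraint turns the bare Hessian of $\mathcal S\mathbf u^{m}$ into precisely the Lagrangian Hessian $\mathbf H(\mathbf u)$ of (\ref{hessian_matrix}). First I would collect three elementary facts. From $\mathcal S\mathbf u^{m-1}=\lambda\mathbf u$ and $\mathbf u^{\mathrm T}\mathbf u=1$, contracting once more with $\mathbf u$ gives $\mathcal S\mathbf u^{m}=\lambda$. For any $\mathbf w\in\mathbb V$ (i.e. $\mathbf u^{\mathrm T}\mathbf w=0$) one has $\mathcal S\mathbf u^{m-1}\mathbf w=\mathbf w^{\mathrm T}(\mathcal S\mathbf u^{m-1})=\lambda\,\mathbf w^{\mathrm T}\mathbf u=0$. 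Finally, the symmetry of $\mathcal S$ yields the binomial-type expansion $\mathcal S(\mathbf u+\mathbf w)^{m}=\sum_{k=0}^{m}\binom{m}{k}\mathcal S\mathbf u^{m-k}\mathbf w^{k}$, in which $\mathcal S\mathbf u^{m-2}\mathbf w^{2}=\mathbf w^{\mathrm T}(\mathcal S\mathbf u^{m-2})\mathbf w$.

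Next I would parametrise the sphere near $\mathbf u$ by $\mathbf u(\mathbf w)=(\mathbf u+\mathbf w)/\sqrt{1+\mathbf w^{\mathrm T}\mathbf w}$ with $\mathbf w\in\mathbb V$: since $\mathbf w\perp\mathbf u$ and $\|\mathbf u\|=1$ this lies on the unit sphere, $\mathbf u(\mathbf 0)=\mathbf u$, and it is a local diffeomorphism from a neighbourhood of $\mathbf 0$ in $\mathbb V$ onto a neighbourhood of $\mathbf u$ in $\{\mathbf v:\mathbf v^{\mathrm T}\mathbf v=1\}$ whose differential at $\mathbf 0$ is the inclusion of $\mathbb V$. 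Substituting, expanding $(1+\mathbf w^{\mathrm T}\mathbf w)^{-m/2}=1-\tfrac m2\mathbf w^{\mathrm T}\mathbf w+O(\|\mathbf w\|^{4})$, using the three facts above to annihilate the constant and linear terms, and collecting the quadratic terms, I obtain
\[
\mathcal S\,\mathbf u(\mathbf w)^{m}=\mathcal S\mathbf u^{m}+\frac{m}{2}\,\mathbf w^{\mathrm T}\mathbf H(\mathbf u)\,\mathbf w+O\!\left(\|\mathbf w\|^{3}\right),
\]
because $\binom m2\mathcal S\mathbf u^{m-2}\mathbf w^{2}-\tfrac m2\lambda\,\mathbf w^{\mathrm T}\mathbf w=\tfrac m2\,\mathbf w^{\mathrm T}\big[(m-1)\mathcal S\mathbf u^{m-2}-\lambda\mathbf I_{n}\big]\mathbf w$. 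This identity is the core of the argument: it exhibits the projected Hessian as exactly the second-order coefficient of $\mathcal S\mathbf u^{m}$ restricted to the sphere.

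The conclusions then follow from the sign of $\mathbf w^{\mathrm T}\mathbf H(\mathbf u)\mathbf w$ on $\mathbb V$. If this form is negative definite on $\mathbb V$, then by compactness of the unit sphere of $\mathbb V$ there is $c>0$ with $\mathbf w^{\mathrm T}\mathbf H(\mathbf u)\mathbf w\le-c\,\mathbf w^{\mathrm T}\mathbf w$, which dominates the cubic remainder for $\mathbf w$ small; hence $\mathcal S\mathbf u(\mathbf w)^{m}<\mathcal S\mathbf u^{m}$ for every point $\ne\mathbf u$ of the sphere near $\mathbf u$, i.e. $\mathbf u$ is a strict local maximiser. In the other direction, if some $\mathbf w_{0}\in\mathbb V$ has $\mathbf w_{0}^{\mathrm T}\mathbf H(\mathbf u)\mathbf w_{0}>0$, the same expansion along $t\mathbf w_{0}$ gives $\mathcal S\mathbf u(t\mathbf w_{0})^{m}>\mathcal S\mathbf u^{m}$ for small $t\ne 0$, so $\mathbf u$ is not a local maximum; equivalently, $\mathbf w^{\mathrm T}\mathbf H(\mathbf u)\mathbf w\le 0$ on $\mathbb V$ is necessary for local maximality.

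The delicate point — and essentially the only obstacle — is the borderline regime where $\mathbf H(\mathbf u)$ is negative \emph{semi}definite but singular on $\mathbb V$: along a kernel direction the second-order term vanishes, and whether $\mathbf u$ is a local maximum is then decided by the third- and higher-order terms of the expansion, so the non-strict condition alone is inconclusive. For this reason I would either present the result in the sharp form above (strict condition $\Rightarrow$ strict local maximum; non-strict condition $\Leftarrow$ local maximum) or simply invoke the standard constrained-optimization theorem (Nocedal--Wright, p.~332), specialising the critical cone $\mathbb V=\mathrm{Null}[(\nabla g)^{\mathrm T}]$ to the sphere $g(\mathbf u)=\mathbf u^{\mathrm T}\mathbf u-1$. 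The remaining items — that $\mathbf u(\mathbf w)$ is a genuine local chart of the sphere and that the remainder is uniformly $O(\|\mathbf w\|^{3})$ on compact pieces of $\mathbb V$ — are routine and would just be recorded.
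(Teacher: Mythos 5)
The paper never proves this statement: it is imported verbatim from \cite{Numerical} (p.~332) and used as a black box, so there is no internal proof to compare yours against, and any correct argument you give is by construction a ``different route.'' Your route is sound where the result is actually true. The chart $\mathbf u(\mathbf w)=(\mathbf u+\mathbf w)/\sqrt{1+\mathbf w^{\mathrm T}\mathbf w}$ with $\mathbf w\in\mathbb V$ does parametrize the sphere near $\mathbf u$, the facts $\mathcal S\mathbf u^{m}=\lambda$ and $\mathbf w^{\mathrm T}\mathcal S\mathbf u^{m-1}=0$ kill the zeroth- and first-order terms, and the identity $\binom{m}{2}\,\mathbf w^{\mathrm T}(\mathcal S\mathbf u^{m-2})\mathbf w-\tfrac m2\lambda\,\mathbf w^{\mathrm T}\mathbf w=\tfrac m2\,\mathbf w^{\mathrm T}\bigl[(m-1)\mathcal S\mathbf u^{m-2}-\lambda\mathbf I_{n}\bigr]\mathbf w$ correctly exhibits the Lagrangian Hessian (\ref{hessian_matrix}) as the second-order coefficient on the sphere; from this the two clean implications (strict negativity of $\mathbf H(\mathbf u)$ on $\mathbb V$ $\Rightarrow$ strict local maximum, and local maximum $\Rightarrow$ $\mathbf w^{\mathrm T}\mathbf H(\mathbf u)\mathbf w\le0$ on $\mathbb V$) follow as you describe. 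Your caveat about the borderline case is not a defect of your proof but a defect of the statement as printed: the claim that the \emph{non-strict} inequality on $\mathbb V$ already implies local maximality is false in general (a semidefinite-but-singular projected Hessian is decided by higher-order terms), and the theorem as written conflates the second-order \emph{necessary} condition with the \emph{sufficient} one in \cite{Numerical}. This mis-statement is harmless for the paper's later use, since in case 1 of Theorem~\ref{localresult} the projected Hessian is $-\lambda\mathbf I_{n-1}\prec0$ and only the strict (sufficient) version is invoked, which your expansion does prove; presenting the result in your sharp two-directional form would in fact be the appropriate correction.
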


In  addition, instead of  directly utilizing  Theorem 
\ref{second_order_necessary},  
it  is  more  preferred to  identify the  local  extremum  by  checking  the  positive or negative  definiteness of  the  projected  Hessian  matrix (denoted 
$\mathbf P 
\in \mathbb R^{(n-1) \times  (n-1)} 
$).  It  is  calculated  by  
$  \mathbf P = 
\mathbf Q_{2}^{\mathrm T}
\mathbf H(\mathbf u) 
\mathbf Q_{2}$,  
where 
$  \mathbf Q_{2} $ is  obtained by
QR  factorization of   $     \triangledown  g $:
\begin{equation}\label{QR_factor}
\begin{split}
\triangledown  g
&=\mathbf Q
\begin{bmatrix}
\mathbf R   \\
\mathbf 0
\end{bmatrix} =
\begin{bmatrix}
\mathbf Q_{1}  &   \mathbf Q_{2}
\end{bmatrix}
\begin{bmatrix}
\mathbf R  \\
\mathbf 0
\end{bmatrix}
=\mathbf Q_{1}
\mathbf R
\end{split},
\end{equation}
where 
$  \mathbf Q $ 
is   an   
$ n \times n $  
orthogonal  matrix, and $ \mathbf R$ is a square upper triangular matrix. 
  In this  case, 
  $ \mathbf R$ is  reduced  to  a  scalar  since  
  $ \triangledown  g $  is  a  column  vector  and  
  $\mathbf 0$ is  an  $ (n-1) \times   1 $  vector  with  all  elements  equal to 0.  
$  \mathbf Q_{1} $  and   
$  \mathbf Q_{2} $
are   
$ n \times 1$,   $ n \times (n-1) $ matrix, respectively. See more  details for  this  part in  Page 337 of \cite{Numerical}. 

The  concept  of  the  projected  Hessian  matrix   has   been    widely  researched   for  tensor  eigenvalues  problem. 
In  \cite{SHOPM},  T.G.Kolda  defines  that    an eigenvector
$ \mathbf u$   was termed positive-stable if
the   corresponding   projected  Hessian  matrix 
 $ \mathbf P$ is positive-definite,   and negative-stable if $ \mathbf P$ is negative-definite.
The  authors   in  \cite{NCM}   proposed  an  algorithm  termed    Orthogonal Newton correction method (ONCM),  where  the  projected  Hessian  matrix  is    calculated      in each   iteration  update  step. 

Using  these   preliminaries,   we   are  interested  in  analyzing  the Z-eigenpair  problem  of   a    special  class  of  symmetric  tensors, which can be  orthogonally diagonalizable. And  the  difinition is as   follows:
\begin{definition}\label{odtdef}\cite{sr1}
	Given a  symmetric  tensor $\mathcal S  \in    T^{m}(\mathbb R^{n}) $,   if  there  exists  a   matrix 
	$\mathbf U = [\mathbf u_{1}, \mathbf u_{2}, \dots, \mathbf u_{r}] \in   \mathbb  R^{n   \times  r}$, composed  of $r$ ($r \le n$)   orthonormal  vectors $ \mathbf u_{i}$ $(i=1,2,\dots, r )$,   
	and  a   diagonal  tensor 
	$   \mathcal D  \in    T^{m}(\mathbb R^{r})     $  
	with     $ \mathcal  D_{i,i,\dots,i}=   \lambda_{i} >  0  $, 
	and     it  holds  
	\begin{align}\label{odt_sum}
	\mathcal  S=  
	 [\mathcal D;   
	\mathbf U ,     
\mathbf U , 
\dots,
\mathbf U]
	=
	\sum_{i=1}^{r}   \lambda_{i} \mathbf  u_{i}^{\circ m}, 
	\end{align} 
	then  $\mathcal S    $ is    called 
	orthogonally  diagonalizable  symmetric  tensor,  which  is   the  summaion  of    $r$     rank-one   tensors   $\mathbf  u_{i}^{\circ m}  \in    T^{m}(\mathbb R^{n}) $ $(i=1,2,\dots, r )$.
\end{definition}

\begin{remark}	
	  The   rank  of   a  tensor  $ \mathcal X$  is defined as the smallest number of rank-one
	tensors   that   generate     $ \mathcal X$  as   their  sum. 
	For  the  symmetric   tensor  $\mathcal S    $ which   can  be    orthogonally  diagonalizable,     it   holds  that 
	$ rank(\mathcal S)  =  rank(\mathbf U)  =r $. 
\end{remark}

Due  to  its  special   structural  property,  the  following  lemma  holds:
\begin{lemma}\label{lemman}
	Let   
	$\mathcal S$
	and 
	$\mathcal D$  $\in    T^{m}(\mathbb R^{r}) $,  
	Let   
	$\mathbf U   \in    \mathbb  R^{n   \times  r} $  be   a      matrix  composed  of $r$   orthonormal  vectors $ \mathbf u_{i}$ $(i=1,2,\dots, r)$,  and   (\ref{odt_sum})
	holds.   Then,  
	$ (\lambda_{i},   \mathbf  u_{i} )  (i=1,2,\dots, r )$    
	is  an  Z-eigenpair  of     the    orthogonally  diagonalizable symmetric   tensor $\mathcal S   $.
\end{lemma}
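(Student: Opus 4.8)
The plan is to verify the defining relation (\ref{definition}) directly for the pair $(\lambda_j,\mathbf u_j)$, for each fixed $j\in\{1,2,\dots,r\}$. Two things must be checked: that $\mathbf u_j^{\mathrm T}\mathbf u_j=1$, and that $\mathcal S\mathbf u_j^{m-1}=\lambda_j\mathbf u_j$. The first is immediate, since the columns of $\mathbf U$ are orthonormal by hypothesis, so the whole content lies in the contraction identity.

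First I would record the elementary fact that for a single rank-one symmetric tensor $\mathbf a^{\circ m}$ and any vector $\mathbf v$ one has $(\mathbf a^{\circ m})\mathbf v^{m-1}=(\mathbf a^{\mathrm T}\mathbf v)^{m-1}\mathbf a$; this follows by writing out (\ref{sum1}) with $s_{k,i_2,\dots,i_m}=a_k a_{i_2}\cdots a_{i_m}$ and factoring the sum over $i_2,\dots,i_m$ into $m-1$ independent copies of $\sum_i a_i v_i$. Then, using the decomposition $\mathcal S=\sum_{i=1}^r\lambda_i\mathbf u_i^{\circ m}$ from (\ref{odt_sum}) and the linearity of $\mathcal T\mapsto\mathcal T\mathbf v^{m-1}$, I would compute
\begin{equation}\notag
\mathcal S\mathbf u_j^{m-1}=\sum_{i=1}^r\lambda_i(\mathbf u_i^{\mathrm T}\mathbf u_j)^{m-1}\mathbf u_i .
\end{equation}
Orthonormality gives $\mathbf u_i^{\mathrm T}\mathbf u_j=\delta_{ij}$, so every term with $i\neq j$ vanishes and the $i=j$ term equals $\lambda_j\mathbf u_j$, which is precisely (\ref{definition}).

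An equivalent route, closer to the notation of (\ref{odt_sum}), is to start from the mode-product form $\mathcal S=[\mathcal D;\mathbf U,\dots,\mathbf U]$ and use (\ref{tensormodenn}): contracting modes $2,\dots,m$ against $\mathbf u_j^{\mathrm T}$ replaces each of those factors $\mathbf U$ by $\mathbf U^{\mathrm T}\mathbf u_j$, which is the $j$-th standard basis vector $\mathbf e_j$ of $\mathbb R^r$ because $\mathbf U$ has orthonormal columns; since $\mathcal D$ is diagonal with $(j,\dots,j)$-entry $\lambda_j$, the remaining contraction collapses to $\lambda_j\mathbf U\mathbf e_j=\lambda_j\mathbf u_j$. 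Either way the computation is routine, and there is no real obstacle here: the only point needing a little care is the index bookkeeping in the rank-one contraction identity, together with the observation that it is orthonormality --- not merely orthogonality --- of the $\mathbf u_i$ that makes the surviving coefficient exactly $1$. Finally, since $\lambda_j>0$ by Definition \ref{odtdef}, the pair is a genuine nonzero eigenpair, and these $r$ pairs are the ``basic eigenpairs'' used later to enumerate all the Z-eigenpairs.
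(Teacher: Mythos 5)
Your proof is correct and follows essentially the same route as the paper's: expand $\mathcal S=\sum_{i}\lambda_i\mathbf u_i^{\circ m}$, apply the rank-one contraction identity, and use orthonormality so that only the $i=j$ term survives, giving $\mathcal S\mathbf u_j^{m-1}=\lambda_j(\mathbf u_j^{\mathrm T}\mathbf u_j)^{m-1}\mathbf u_j=\lambda_j\mathbf u_j$. The mode-product variant you sketch is just a notational repackaging of the same computation, so there is nothing further to add.
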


\begin{proof}
	Since
	$    \mathbf  u_{i}^{\mathrm T}  \mathbf  u_{j} = 0   $    for $ \forall $   $   i \neq j $,  thus      
	\begin{equation}\label{odt_eig}
	\mathcal  S   \mathbf  u_{i}^{m-1}
	= 
	(  \sum_{i=1}^{r}   \lambda_{i} \mathbf  u_{i}^{\circ m}   )  \mathbf  u_{i}^{m-1}
	=      (\lambda_{i} \mathbf  u_{i}^{\circ m})    \mathbf  u_{i}^{m-1}
	=   \lambda_{i}  (  \mathbf  u_{i}^{\mathrm T}\mathbf  u_{i})^{m-1}          \mathbf  u_{i}
	=   \lambda_{i}        \mathbf  u_{i}  ,
	\end{equation}
	which  implies that  the conclusion  holds.
\end{proof}

\section{Main  results }

Based  on   Lemma  \ref{lemman}, we  first  show  that   the  Z-eigenpair  of  $  \mathcal S$  can  be   uniformly  expressed  as  follows:

\begin{lemma}\label{result_combination}
	Let   
	$\mathcal S$
	and 
	$\mathcal D$  $\in    T^{m}(\mathbb R^{r}) $ 	with     $ \mathcal  D_{i,i,\dots,i}=   \lambda_{i} >  0  $. 
	Let   
	$\mathbf U   \in    \mathbb  R^{n   \times  r} $  be   a   matrix  composed  of $r$   orthonormal  vectors $ \mathbf u_{i}$ $(i=1,2,\dots, r)$,  and   (\ref{odt_sum})
	holds. 
	For 
	$  i \in  \mathbb A$,
	we   assume  that  
	$\tilde { \mathbf  u} 
	=  \sum_{i \in  \mathbb A}  
	\tilde { c}_{i}   			\mathbf  u_{i} $  is   a  linear combination  of  	$\mathbf  u_{i}  $,   
	where   the  coefficients  
	$   \tilde { c}_{i}
	= 
	\sqrt[\uproot{3} {m-2}]
	{
		\frac  
		{1} 
		{          \lambda_{i}     }    
	}      $.   
	Denote  
	$  l=
	\Vert \tilde { \mathbf u }\Vert 
	= \sqrt  
	{ \sum_{i \in  \mathbb A     } 
		\tilde { c}_{i}^{2}  } $,   
	then  
	$   ( \lambda= 
	\frac  { 1}   {    l^{m-2}          },  
	\mathbf  u= 
	\frac  
	{ \tilde { \mathbf  u}    } 
	{        l    }  
	)    $ 
	is  an   eigenpair  of 
	$     \mathcal  S   $.
\end{lemma}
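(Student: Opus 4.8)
The plan is to reduce everything to a direct application of Lemma~\ref{lemman} together with the scaling (equivalence-class) property already recorded before Definition~\ref{euqclass}. First I would evaluate $\mathcal S \tilde{\mathbf u}^{m-1}$ directly from the decomposition \eqref{odt_sum}. Since the $\mathbf u_i$ are orthonormal, for each $j$ we have $\mathbf u_j^{\mathrm T}\tilde{\mathbf u} = \tilde c_j$ when $j\in\mathbb A$ and $\mathbf u_j^{\mathrm T}\tilde{\mathbf u}=0$ otherwise; combining this with the rank-one computation $(\lambda_i\mathbf u_i^{\circ m})\mathbf w^{m-1} = \lambda_i(\mathbf u_i^{\mathrm T}\mathbf w)^{m-1}\mathbf u_i$ used in \eqref{odt_eig} gives
\begin{equation}\notag
\mathcal S \tilde{\mathbf u}^{m-1} = \sum_{i=1}^{r}\lambda_i(\mathbf u_i^{\mathrm T}\tilde{\mathbf u})^{m-1}\mathbf u_i = \sum_{i\in\mathbb A}\lambda_i\tilde c_i^{\,m-1}\mathbf u_i.
\end{equation}

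The key step is then the exponent bookkeeping on the coefficients. With $\tilde c_i = \lambda_i^{-1/(m-2)}$ one checks that $\lambda_i\tilde c_i^{\,m-1} = \lambda_i^{1-(m-1)/(m-2)} = \lambda_i^{-1/(m-2)} = \tilde c_i$, so the displayed sum collapses to $\sum_{i\in\mathbb A}\tilde c_i\mathbf u_i = \tilde{\mathbf u}$. Hence $\mathcal S\tilde{\mathbf u}^{m-1} = \tilde{\mathbf u}$, i.e. $(\,1,\tilde{\mathbf u}\,)$ satisfies the eigen-equation \eqref{definition} except that $\tilde{\mathbf u}$ is not a unit vector. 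This is really the only substantive computation; I expect the (mild) care needed here — keeping track of the fractional exponents and the fact that $\lambda_i>0$ makes $\tilde c_i$ well defined and real — to be the main point to get right, and it is routine rather than a genuine obstacle.

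To finish, I would invoke the scaling invariance: since $(\lambda,\mathbf u)$ being a solution of \eqref{definition} implies $(t^{m-2}\lambda, t\mathbf u)$ is too for any $t\neq 0$, apply this with $t = 1/l$ to the pair $(1,\tilde{\mathbf u})$. This yields $\mathcal S(\tilde{\mathbf u}/l)^{m-1} = (1/l)^{m-2}(\tilde{\mathbf u}/l)$, and since $\|\tilde{\mathbf u}/l\| = 1$ by the definition of $l$, the pair $\bigl(\lambda = 1/l^{m-2},\ \mathbf u = \tilde{\mathbf u}/l\bigr)$ is an (normalized) Z-eigenpair of $\mathcal S$, which is exactly the claim. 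Alternatively, one can bypass the scaling lemma and simply substitute $\mathbf u=\tilde{\mathbf u}/l$ directly into $\mathcal S\mathbf u^{m-1}$, using multilinearity to pull out the factor $l^{-(m-1)}$ and then $\mathcal S\tilde{\mathbf u}^{m-1}=\tilde{\mathbf u}=l\,\mathbf u$; both routes give the same conclusion.
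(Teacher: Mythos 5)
Your proposal is correct and follows essentially the same route as the paper: expand $\mathcal S\tilde{\mathbf u}^{m-1}$ using the orthogonal decomposition (\ref{odt_sum}), check that the choice $\tilde c_i=\lambda_i^{-1/(m-2)}$ makes $\lambda_i\tilde c_i^{\,m-1}=\tilde c_i$ so that $\mathcal S\tilde{\mathbf u}^{m-1}=\tilde{\mathbf u}$, and then rescale by $1/l$ to obtain the normalized eigenpair $(1/l^{m-2},\,\tilde{\mathbf u}/l)$. Your explicit use of orthonormality via $\mathbf u_j^{\mathrm T}\tilde{\mathbf u}$ actually justifies the paper's middle equality in (\ref{sum1odt}) more carefully, but it is the same argument, not a different one.
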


\begin{proof}
	We  start   by  computing 
	\begin{equation}\label{sum1odt}
	\mathcal S     
	\tilde { \mathbf  u}^{m-1}
	=
	\mathcal S  
	(  \sum_{    i \in  \mathbb A   } 
	\tilde { c}_{i} 
	\mathbf  u_{i}  )^{m-1}
	=
	\sum\limits_{    i \in  \mathbb A}  
	\tilde { c}_{i}^{m-1}  
	\mathcal S 
	\mathbf  u_{i} ^{m-1}  
	=
	\sum\limits_{   i \in  \mathbb A   }  
	\tilde { c}_{i}^{m-1}    \lambda_{i}  \mathbf  u_{i}     .   
	\end{equation}
	When  
		\begin{equation}\label{cim1}
	 \tilde { c}_{i}^{m-1}    \lambda_{i} =  \tilde { c}_{i}  , 
	 \end{equation}
is   satisfied,  i.e., 	$   \tilde { c}_{i}
	= 
	\sqrt[\uproot{3} {m-2}]
	{
		\frac  
		{1} 
		{          \lambda_{i}     }    
	}      
$,    
	it  holds     that 
	$\mathcal S   
	\tilde {\mathbf  u}^{m-1} =   \tilde {\mathbf  u}$.  
	Normalize  it  into a unit  length, 
	it   can  be  derived   
	\begin{equation}\label{normalization}
	\mathcal S  (  
	\frac  { \tilde { \mathbf  u}     }   {l}
	)^{m-1}=  
	\frac  { 1}   {l^{m-2}}  
	\frac  
	{ \tilde { \mathbf  u}   }   
	{l}    , 
	\end{equation}
	which   implies  that   $   ( \lambda=  \frac  { 1}   {l^{m-2}},  \mathbf u= \frac  {\tilde { \mathbf  u} }  {l}  )    $
	is  an  Z-eigenpair   of   $\mathcal S $. 
\end{proof} 


An  equivalent   matrix  expression  is :   
\begin{equation}\label{uUc}
  \mathbf u  =    \mathbf U   \mathbf c, 
  \end{equation} 
   where   
 $  \mathbf c  = 
\begin{bmatrix}
\mathbf c_{\mathbb  A} 
\\  \mathbf 0_{r-k}
\end{bmatrix} \in  
\mathbb R^{r \times  1} $, 
$ \mathbf c_{\mathbb  A}  \in  
\mathbb R^{k \times  1}$,  
$ \mathbf 0_{r-k}  \in  
\mathbb R^{(r-k) \times  1}$, 
and  
\begin{equation}\label{ci}
c_{i}  = 
\frac  
{   \tilde c_{i}      }
{  l  }  
=
\frac  
{    \sqrt[\uproot{3} {m-2}] {
		\frac  {1} {\lambda_{i} }    }      }
{  l  } ,  \quad    
i  \in   \mathbb  A.  
\end{equation} 
%

Lemma  \ref{result_combination}  shows  that  the  eigenpairs  of  
$\mathcal S $  can  be  uniformly  expressed   using   $  (\lambda_{i},   \mathbf  u_{i} )$  $(i=1,2,\dots, r )$, which  is  termed  the  basic  eigenpairs for  the  symmetric  tensor $\mathcal S $.  And  all  the  eigenvectors   is  a    linear  combination  of  the   basic  eigenvectors where  the  $k$ coefficients  are   given  in  (\ref{ci})   while the  left  
$r-k$ coefficients  are  equal  to  0.
Here, for  convenience  in the  later  analysis,    we 
do not  arrange  the  $ r$  eigenvectors  from  
$\mathbf  u_{1}$  to  $\mathbf  u_{r}$,  but      always   preferentially    arrange  the  $k$  participated   eigenvector.  
It is  easily  checked  that   
\begin{equation}\label{lmdici}
\lambda_{i}  c_{i}^{m-2}  =
\lambda_{i} 
(\frac  
{    \sqrt[\uproot{3} {m-2}] {
		\frac  {1} {\lambda_{i} }    }      }
{  l  }  
)^{m-2}
=
\lambda >0,
\quad 
i  \in   \mathbb  A. 
\end{equation}  

When 
$\vert    \mathbb  A  \vert  =k=1     $,  
it  falls into   
$  (\lambda_{i},   \mathbf  u_{i} )  (i=1,2,\dots, r)$.
Naturally,  the  following     question      to  be  answered  is    how  much   is  the  number  of  all  the   eigenpairs,  and  we   have  the  following  lemma:
\begin{lemma}
	Given a  symmetric  tensor $\mathcal S  \in    T^{m}(\mathbb R^{n}) $,   if  there  exists  a   matrix 
$\mathbf U = [\mathbf u_{1}, \mathbf u_{2}, \dots, \mathbf u_{r}] \in   \mathbb  R^{n   \times  r}$, composed  of $r$ ($r \le n$)   orthonormal  vectors $ \mathbf u_{i}$ $(i=1,2,\dots, r )$,   
and  a   diagonal  tensor 
$   \mathcal D  \in    T^{m}(\mathbb R^{r})     $  
with     $ \mathcal  D_{i,i,\dots,i}=   \lambda_{i} >  0  $, 
and    (\ref{odt_sum})  holds,  then,   the  number  of  all  the  eigenpairs  is   given  by  $\frac{(m-1)^{r}-1}{m-2} $. 
\end{lemma}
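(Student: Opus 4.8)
The plan is to enumerate \emph{all} equivalence classes of eigenpairs (in the sense of Definition~\ref{euqclass}) directly, using Lemma~\ref{result_combination} as the building block. First I would note that, by orthonormality of the columns of $\mathbf U$, $\mathcal S\mathbf u^{m-1}=\sum_{i=1}^{r}\lambda_i(\mathbf u_i^{\mathrm T}\mathbf u)^{m-1}\mathbf u_i$ always lies in the column space of $\mathbf U$; hence for any eigenpair $(\lambda,\mathbf u)$ with $\lambda\neq 0$ we get $\mathbf u=\lambda^{-1}\mathcal S\mathbf u^{m-1}=\mathbf U\mathbf c$ for the unique coefficient vector $\mathbf c=\mathbf U^{\mathrm T}\mathbf u\in\mathbb C^{r}$. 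Substituting $\mathbf u=\sum_i c_i\mathbf u_i$ and arguing as in (\ref{sum1odt}) reduces the eigen-equation to the \emph{decoupled} system $\lambda_i c_i^{m-1}=\lambda c_i$ for $i=1,\dots,r$, i.e. for each index $i$ either $c_i=0$ or $\lambda_i c_i^{m-2}=\lambda$. (If $\lambda=0$ then $\mathbf u$ must be orthogonal to $\operatorname{span}(\mathbf U)$; this is impossible when $r=n$, and when $r<n$ it would produce a whole sphere of unit eigenvectors, contradicting the standing assumption that there are only finitely many solutions, so no eigenpair with $\lambda=0$ enters the count.)

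Next I would stratify the eigenpairs according to their \emph{support} $\mathbb A:=\{\,i : c_i\neq 0\,\}$, a nonempty subset of $\{1,\dots,r\}$ that is invariant along an equivalence class (since $\mathbf c=\mathbf U^{\mathrm T}\mathbf u$ is multiplied by $t$ under $\mathbf u\mapsto t\mathbf u$). Fix a nonempty $\mathbb A$ with $|\mathbb A|=k$; I claim exactly $(m-2)^{k-1}$ equivalence classes have support $\mathbb A$. Every such class has a representative with $\lambda=1$, obtained by scaling by any $t$ with $t^{m-2}=\lambda^{-1}$ (which exists over $\mathbb C$ because $\lambda\neq 0$). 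The representatives with $\lambda=1$ are exactly the vectors $\mathbf u=\sum_{i\in\mathbb A}c_i\mathbf u_i$ with $c_i^{m-2}=1/\lambda_i$ for every $i\in\mathbb A$; since each such equation has precisely $m-2$ roots in $\mathbb C$, there are $(m-2)^{k}$ of them. Two such representatives, with coefficient vectors $\mathbf c$ and $\mathbf c'$, are equivalent if and only if $\mathbf c'=t\mathbf c$ with $t^{m-2}=1$, and this action of the $(m-2)$-th roots of unity is free because every $c_i$ with $i\in\mathbb A$ is nonzero; hence its orbits all have cardinality $m-2$, and the number of classes with support $\mathbb A$ is $(m-2)^{k}/(m-2)=(m-2)^{k-1}$. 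In particular $k=1$ recovers the single basic eigenpair $(\lambda_i,\mathbf u_i)$ of Lemma~\ref{lemman}.

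Finally, eigenpairs with different supports belong to different equivalence classes, so summing the per-support counts over all nonempty $\mathbb A\subseteq\{1,\dots,r\}$ and applying the binomial theorem yields
\[
\sum_{k=1}^{r}\binom{r}{k}(m-2)^{k-1}
=\frac{1}{m-2}\Bigl((1+(m-2))^{r}-1\Bigr)
=\frac{(m-1)^{r}-1}{m-2},
\]
which is the asserted number; note this equals the Cartwright--Sturmfels bound $M(m,r)$, so these eigenpairs are moreover all simple. The step I expect to require the most care is the per-support count: one must check over $\mathbb C$ that the slice $\{\lambda=1\}$ meets every pertinent class, that it contains exactly $(m-2)^{k}$ points, and that the root-of-unity action on it is free --- the subtleties being the proliferation of $(m-2)$-th roots when $m$ is large and the (harmless) possibility that a representative is isotropic, $\mathbf u^{\mathrm T}\mathbf u=0$, hence not rescalable to unit norm.
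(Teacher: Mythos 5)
Your count is correct, and its combinatorial core coincides with the paper's: stratify by the support $\mathbb A$, allow $m-2$ choices of root per active coordinate, sum $\binom{r}{k}(m-2)^{k}$ with the binomial theorem, and remove the $(m-2)$-fold redundancy coming from Definition~\ref{euqclass} --- you perform that division support-by-support (via the free action of the $(m-2)$-th roots of unity on the slice $\lambda=1$), whereas the paper sums to $(m-1)^{r}-1$ and divides once at the end, citing \cite{fasthm}. Where you genuinely go beyond the paper is completeness: the paper's proof only counts the eigenpairs constructed in Lemma~\ref{result_combination} and never argues that no other eigenpairs exist, while you observe that $\mathcal S\mathbf u^{m-1}$ always lies in the column space of $\mathbf U$, so any eigenpair with $\lambda\neq 0$ has $\mathbf u=\mathbf U\mathbf c$, and the eigen-equation decouples into $\lambda_i c_i^{m-1}=\lambda c_i$; together with the observation that the support is invariant along an equivalence class, this makes the enumeration exhaustive, which is exactly what is needed for ``the number of all the eigenpairs'' to be an equality rather than a lower bound.

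One small caveat in your handling of $\lambda=0$: the ``whole sphere of eigenvectors contradicts finiteness'' argument only applies when $n-r\ge 2$. If $n-r=1$, the orthogonal complement of $\operatorname{col}(\mathbf U)$ contributes exactly one additional equivalence class of zero eigenpairs, which neither your count nor the paper's formula includes; the paper silently ignores zero eigenpairs altogether, so your treatment is still the more careful one, but for $r<n$ the statement should be read as counting the eigenpairs with nonzero eigenvalue. The closing remark that the count meets the Cartwright--Sturmfels bound and hence all eigenpairs are simple is an aside and is not justified when $r<n$ (the relevant bound is $M(m,n)$, and the zero classes interfere); it plays no role in the proof and is best dropped.
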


\begin{proof}
	It  can be  seen   from   (\ref{ci})  that  $     c_{i}  $    can   be  chosen  to  be  one  of  the $ (m-2) $    roots  of   $ \frac  {1} {\lambda_{i}} $ ($ i=1,2,  \dots,  r $). 
	Since $  k  $    is    randomly chosen  form  $ \{1,2,  \dots,  r\} $,       the total   number   of  the  combinations  is 
	\begin{equation}\label{totalnumber}
	\begin{split}
	\sum\limits_{k=1}^{r}
	\left(
	\begin{array}{c}
	r   \\
	k
	\end{array}  \right) 
	(m-2)^{k}
	&=  
	(m-2+1)^{r} - 
	\left(
	\begin{array}{c}
	r   \\
	0
	\end{array}  \right)   
	= (m-1)^{r}-1
	\end{split} .
	\end{equation}

	Based  on    Definition  \ref{equclass},  there
	are $ m-2$ distinct members of every equivalence class, and it  is enough for   each   equivalence  class  to   find  one  of  them [Theorem  5 in \cite{fasthm}].  So   the  number  of  all  the  eigenpairs is   given  by  $\frac{(m-1)^{r}-1}{m-2} $,  which  is  determined  by  the  order  and  rank  of  the   symmetirc  tensor. 
\end{proof}

\begin{remark}
	When  	$ rank(\mathcal S)  =  rank(\mathbf U)  =r =n $,     it is  easily  checked  that  the  number  of  all  eigenpairs  is  given  by  $\frac{(m-1)^{n}-1}{m-2} $ =  
$ M(m,n)$,  which  will  reach  at  the  theoretical  upper  bound  for  the number of   eigenpairs   of  the    symmetric  tensor.   
\end{remark}

So  far,  we   has  answered  the  questions  for   an   orthogonally  diagonalizable symmetric   tensor $\mathcal S  \in    T^{m}(\mathbb R^{n}) $: 
\begin{itemize}
	\item   what  is  each   eigenpair?
	\item    how  much   is  the  number  of  all     eigenpairs? 
\end{itemize}

 Next,  we turn to  analyzing   Theorem  \ref{second_order_necessary},  i.e.,   the  local  optimality  of   each  eigenpair of  the    orthogonally  diagonalizable  symmetric  tensor $\mathcal S   \in    T^{m}(\mathbb R^{n})  $. 
The  first  difficulty   that needs  to  be  solved  is how  to obtain  the  matrix $ \mathbf Q_{2}$.
Since  it is  derived  via  QR factorization, it  can only be  numerically  computed but   not  suitable to be   theoretically  analyzed. 
To deal  with  this  issue,
the  following  lemma is  presented: 
\begin{lemma}\label{phequal}  
	Let  $\mathbf  u	\in \mathbb R^{n\times  1} $   and  $  \mathbf P = 
	\mathbf Q_{2}^{\mathrm T}
	\mathbf H(\mathbf u) 
	\mathbf Q_{2} 
	\in \mathbb R^{(n-1) \times  (n-1)} 
	$  be   the  projected  Hessian  matrix   at  the  point    $\mathbf  u$,  where  $\mathbf H(\mathbf u) $  is  defined  in  (\ref{hessian_matrix}), and    $ \mathbf Q_{2}$ is  calculated  by  (\ref{QR_factor}). 
	Define  a  new  matrix 
	$  \mathbf {M}=  \mathbf  P_{\mathbf  u}^{\bot} \mathbf H (\mathbf u) 
	\mathbf  P_{\mathbf  u}^{\bot}   $,
	where   
	$  \mathbf  P_{\mathbf  u}^{\bot} $ is the orthogonal  cpmplement   projector  of   $\mathbf u$.
	Then,  it     holds  
	\begin{equation}
	\mathbf P   \preceq 0
	\quad 
	\Leftrightarrow
	\quad 
	\mathbf {M}  \preceq 0.	
	\end{equation} 	  
\end{lemma}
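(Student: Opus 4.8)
The plan is to translate the equivalence into a statement about a \emph{congruence}-type relation between $\mathbf P$ and $\mathbf M$ induced by the matrix $\mathbf Q_2$, and then read off definiteness from it. First I would pin down what $\mathbf Q_2$ is. From the QR factorization (\ref{QR_factor}) of $\triangledown g = \mathbf u$ with the scalar $\mathbf R = \pm \Vert \mathbf u\Vert$, the first block is $\mathbf Q_1 = \mathbf u / \Vert \mathbf u\Vert$, and orthogonality of $\mathbf Q = [\,\mathbf Q_1 \ \ \mathbf Q_2\,]$ gives the two identities
\begin{equation}\notag
\mathbf Q_2^{\mathrm T}\mathbf Q_2 = \mathbf I_{n-1},
\qquad
\mathbf Q_2\mathbf Q_2^{\mathrm T} = \mathbf I_n - \mathbf Q_1\mathbf Q_1^{\mathrm T}
= \mathbf I_n - \mathbf u(\mathbf u^{\mathrm T}\mathbf u)^{-1}\mathbf u^{\mathrm T}
= \mathbf P_{\mathbf u}^{\bot}.
\end{equation}
In words, the columns of $\mathbf Q_2$ are an orthonormal basis of $\mathrm{Null}(\mathbf u^{\mathrm T}) = \mathrm{Range}(\mathbf P_{\mathbf u}^{\bot})$, so in particular $\mathbf P_{\mathbf u}^{\bot}\mathbf Q_2 = \mathbf Q_2$ and $\mathbf Q_2^{\mathrm T}\mathbf P_{\mathbf u}^{\bot} = \mathbf Q_2^{\mathrm T}$.

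Next I would use these identities to write each of $\mathbf P$ and $\mathbf M$ in terms of the other. Substituting $\mathbf P_{\mathbf u}^{\bot} = \mathbf Q_2\mathbf Q_2^{\mathrm T}$ into $\mathbf M = \mathbf P_{\mathbf u}^{\bot}\mathbf H(\mathbf u)\mathbf P_{\mathbf u}^{\bot}$ gives
\begin{equation}\notag
\mathbf M = \mathbf Q_2\bigl(\mathbf Q_2^{\mathrm T}\mathbf H(\mathbf u)\mathbf Q_2\bigr)\mathbf Q_2^{\mathrm T} = \mathbf Q_2\,\mathbf P\,\mathbf Q_2^{\mathrm T},
\end{equation}
and conversely, using $\mathbf P_{\mathbf u}^{\bot}\mathbf Q_2 = \mathbf Q_2$,
\begin{equation}\notag
\mathbf Q_2^{\mathrm T}\mathbf M\mathbf Q_2 = \mathbf Q_2^{\mathrm T}\mathbf P_{\mathbf u}^{\bot}\mathbf H(\mathbf u)\mathbf P_{\mathbf u}^{\bot}\mathbf Q_2 = \mathbf Q_2^{\mathrm T}\mathbf H(\mathbf u)\mathbf Q_2 = \mathbf P.
\end{equation}

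With these two relations the equivalence is immediate in both directions. If $\mathbf P \preceq 0$, then for every $\mathbf v\in\mathbb R^n$ we have $\mathbf v^{\mathrm T}\mathbf M\mathbf v = (\mathbf Q_2^{\mathrm T}\mathbf v)^{\mathrm T}\mathbf P(\mathbf Q_2^{\mathrm T}\mathbf v)\le 0$, so $\mathbf M\preceq 0$; if $\mathbf M\preceq 0$, then for every $\mathbf w\in\mathbb R^{n-1}$ we have $\mathbf w^{\mathrm T}\mathbf P\mathbf w = (\mathbf Q_2\mathbf w)^{\mathrm T}\mathbf M(\mathbf Q_2\mathbf w)\le 0$, so $\mathbf P\preceq 0$. (The same argument with strict inequalities, noting that $\mathbf Q_2$ has full column rank so $\mathbf Q_2\mathbf w\neq\mathbf 0$ for $\mathbf w\neq\mathbf 0$, shows the negative-definite versions are equivalent too, which will be convenient later.) I do not expect a genuine obstacle here; the only point that needs care is the very first step---justifying that $\mathbf Q_2$ from the QR factorization of the column vector $\mathbf u$ really does satisfy $\mathbf Q_2\mathbf Q_2^{\mathrm T} = \mathbf P_{\mathbf u}^{\bot}$ (rather than merely having column span contained in $\mathrm{Range}(\mathbf P_{\mathbf u}^{\bot})$), which is where the orthogonality of the full factor $\mathbf Q$ and the dimension count $n = 1 + (n-1)$ are used.
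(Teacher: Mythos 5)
Your proof is correct for the lemma as stated, and it shares the paper's key step -- identifying $\mathbf P_{\mathbf u}^{\bot}=\mathbf Q_{2}\mathbf Q_{2}^{\mathrm T}$ from the QR factorization of $\triangledown g=\mathbf u$ -- but from there it takes a genuinely different and more elementary route. The paper writes the eigen-decomposition $\mathbf P=\mathbf V\mathbf\Lambda\mathbf V^{\mathrm T}$, obtains $\mathbf M=\mathbf W\mathbf\Lambda\mathbf W^{\mathrm T}$ with $\mathbf W=\mathbf Q_{2}\mathbf V$, and completes $\mathbf W$ to an orthogonal matrix to conclude that the spectrum of $\mathbf M$ is exactly the spectrum of $\mathbf P$ together with one extra zero eigenvalue; the semidefinite equivalence is then read off from the eigenvalues. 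You instead establish the two algebraic identities $\mathbf M=\mathbf Q_{2}\mathbf P\mathbf Q_{2}^{\mathrm T}$ and $\mathbf P=\mathbf Q_{2}^{\mathrm T}\mathbf M\mathbf Q_{2}$ and transfer negative semidefiniteness in both directions through quadratic forms, which avoids any spectral decomposition and the completion step (and, incidentally, the paper's reuse of the symbol $\mathbf V$ for two different matrices). The trade-off is that the paper's spectral version is stronger and is what gets used later: in Theorem \ref{localresult} the authors pass from the eigenvalues of $\mathbf M$ (namely $-\lambda$ repeated, $(m-2)\lambda$ repeated, and $0$) to strict definiteness conclusions about $\mathbf P$, which needs the exact eigenvalue correspondence, not just the semidefinite equivalence. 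One genuine slip in your aside: the claim that "the negative-definite versions are equivalent too" is false, because $\mathbf M\mathbf u=\mathbf 0$ always, so $\mathbf M$ is never negative definite; only the implication $\mathbf P\prec 0\Rightarrow\mathbf M\preceq 0$ with $\operatorname{rank}(\mathbf M)=n-1$ (equivalently, $\mathbf M$ negative definite on the range of $\mathbf P_{\mathbf u}^{\bot}$) holds, and your full-column-rank argument gives exactly the direction $\mathbf M$ negative definite on that subspace implies $\mathbf P\prec 0$. This does not affect the lemma itself, but you would need the corrected strict statement (or the paper's spectral formulation) when applying the lemma to the local-optimality analysis.
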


\begin{proof}

	First, denote the eigen-decomposition of $ \mathbf P $ as
	\begin{equation}\label{phevd1}
	\mathbf P = 
	\mathbf V
	\mathbf \Lambda 
	\mathbf V^{\mathrm T},
	\end{equation}
	where 
		\begin{equation}\label{lmdeig} 
		\mathbf \Lambda = 
	diag(\sigma_{1},  \sigma_{2},  \dots, \sigma_{\it n-\rm 1}) 
	\end{equation}  is  the  eigenvalue  matrix,
	and 
	$ \mathbf V $ 
	is  an
	$ (n-1) \times  (n-1)$
	orthogonal matrix.
	Furthermore, it can be easily checked that
	\begin{equation}\label{projectionconvert}
	\begin{split}
	\mathbf P_{\mathbf u }^{\bot}
	= 
	\mathbf I- 
	\mathbf u  
	\mathbf u^{\mathrm {T}}
	= 
	\mathbf Q 
	\mathbf Q^{\mathrm {T}} 
	-  
	\mathbf Q_{1} 
	\mathbf R 
	\mathbf R^{\mathrm {T}} 
	\mathbf Q_{1}^{\mathrm {T}} 
	\\   
	=
	\begin{bmatrix}
	\mathbf Q_{1}  &\mathbf Q_{2}
	\end{bmatrix}
	\begin{bmatrix}
	\mathbf Q_{1}^{\mathrm {T}}  \\   \mathbf Q_{2}^{\mathrm {T}}
	\end{bmatrix}
	-
	\mathbf Q_{1}  \mathbf Q_{1}^{\mathrm {T}}     =\mathbf Q_{2}  \mathbf Q_{2}^{\mathrm {T}}
	\end{split},
	\end{equation}
	then it holds
	\begin{equation}\label{eq2}
	\begin{split}
	\mathbf {M}
	&=
	\mathbf P_{\mathbf u }^{\bot} 
	\mathbf H(\mathbf u) 
	\mathbf P_{\mathbf u }^{\bot}
	= 
	\mathbf Q_{2} 
	\mathbf Q_{2}^{\mathrm {T}} 
	\mathbf H(\mathbf u) 
	\mathbf Q_{2}  
	\mathbf Q_{2}^{\mathrm {T}}
	\\
	&=
	\mathbf Q_{2}  
	\mathbf P  
	\mathbf Q_{2}^{\mathrm {T}}    
	=
	\mathbf Q_{2}  
	\mathbf V
	\mathbf \Lambda  
	( \mathbf Q_{2}   \mathbf V)^{\mathrm {T}}  
	=
	\mathbf W   
	\mathbf \Lambda   
	\mathbf W^{\mathrm {T}}
	\end{split} ,
	\end{equation}
	where $   \mathbf W = \mathbf Q_{2}   \mathbf V $ is   an $ n \times  (n-1)$ orthogonal matrix.
	Since   $ \mathbf W $ is singular, whose rank is $ n-1$, we  rewrite  (\ref{eq2}) as
	\begin{equation}\label{eq_evd}
	\begin{split}
	\mathbf {M}
	&=
	\mathbf W   
	\mathbf \Lambda  
	\mathbf W^{\mathrm {T}}    
	=
	\begin{bmatrix}
	\mathbf W  
	&\mathbf v
	\end{bmatrix}
	\begin{bmatrix}
	\mathbf \Lambda  &   0     \\
	0    &   0
	\end{bmatrix}
	\begin{bmatrix}
	\mathbf W^{\mathrm {T}} 
	\\  
	\mathbf v^{\mathrm {T}}
	\end{bmatrix}
	\end{split}
	\end{equation}
	where 
	$ \mathbf v \in  \mathbb R^{n  \times 1} $ 
	is a unit vector, which  lies in the null space of  
	$ \mathbf W^{\mathrm {T}} $, i.e., 
	$ \mathbf W^{\mathrm {T}}
	\mathbf v =0 $,
	and  
	$ \mathbf v^{\mathrm {T}}
	\mathbf v =1 $.
	Then it can proved that 
	$ \mathbf V=  \begin{bmatrix}
	\mathbf W  &\mathbf v
	\end{bmatrix}  $
	is an orthogonal matrix, and thus, the  eigenvalues  matrix  of 
	$ \mathbf P_{\mathbf u }^{\bot} 
	\mathbf H(\mathbf u)  
	\mathbf P_{\mathbf u }^{\bot} $
	are 
	\begin{equation}\label{lmdeigtilde}
\tilde {\mathbf \Lambda } 
=diag(	\sigma_{1},  \sigma_{2},  \dots, \sigma_{n-1}, 0 ) .
	\end{equation}   
	
	In this way, based  on   (\ref{lmdeig})   and  (\ref{lmdeigtilde}), we can conclude that judging    the  positive or  negative  semidefiniteness  of $ \mathbf {P} $ is equivalent to
	judging that   of 
	$ \mathbf {M} $,  and   vice verse.	
\end{proof}

By   using   Lemma  \ref{phequal},  we can     aviod   calculating  the   QR  factorization   of  $\mathbf  u $, and   turn  to   analyzing  the 
positive or  negative  semidefiniteness  of 
$ \mathbf {M} $, which   can  be   explicitly  expressed.         
Now, we   are  interested  in    identifying   that  which   eigenpair    corresponds  to  the  local   extremum  of  
(\ref{opti_ori}). 
In  \cite{hsu},  the  authors  has  considered  the  case  of  $  m=3$.

\begin{theorem}[Theorem 4.2 in  \cite{hsu}]
	\label{hsuresult}
	Let $\mathcal T \in    T^{3}(\mathbb R^{n})   $  have an orthogonal decomposition as given in the form of  
	\begin{equation} \mathcal  T
	=
	\sum_{i=1}^{r}   \lambda_{i} \mathbf  u_{i}^{\circ 3},
	\end{equation}
	and consider the
	optimization problem
	\begin{equation}
	\notag
	\max_{\mathbf u}  \quad \mathcal T \mathbf u^{3}  
	\quad
	\rm s.t.  \quad \mathbf u^{\mathrm {T}}\mathbf u  \le  1. 
	\end{equation}
	Then,  1): the stationary points are eigenvectors of $\mathcal T $,  and  2):  a  stationary point 
	$\mathbf  u $   is an isolated local maximizer if and only if $\mathbf  u =\mathbf  u_{i}  $ ($i=1,2, \dots, r$).
\end{theorem}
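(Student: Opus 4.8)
The plan is to establish the two assertions separately, using the eigenvector description behind Lemma~\ref{result_combination} (specialized to $m=3$) and the closed form of the Hessian in (\ref{hessian_matrix}). For assertion~1 I would write the KKT system of $\max\mathcal T\mathbf u^{3}$ subject to $\mathbf u^{\mathrm T}\mathbf u\le 1$: at a stationary point $3\,\mathcal T\mathbf u^{2}=2\mu\mathbf u$ with $\mu\ge 0$ and $\mu(\mathbf u^{\mathrm T}\mathbf u-1)=0$, so setting $\lambda:=\tfrac23\mu\ge 0$ gives $\mathcal T\mathbf u^{2}=\lambda\mathbf u$ and every stationary point is an eigenvector (the interior case $\mu=0$ forces $\mathcal T\mathbf u^{2}=\mathbf 0$, which by orthonormality of the $\mathbf u_i$ and $\lambda_i>0$ means $\mathbf u\perp\operatorname{span}\{\mathbf u_1,\dots,\mathbf u_r\}$). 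Writing $\mathbf u=\sum_i c_i\mathbf u_i+\mathbf w$ with $\mathbf w\perp\operatorname{span}\{\mathbf u_i\}$ and substituting (\ref{odt_sum}) exactly as in the proof of Lemma~\ref{result_combination}, I would deduce $\lambda\mathbf w=\mathbf 0$ and $\lambda_i c_i^{2}=\lambda c_i$ for every $i$; hence either $\lambda=0$ with $\mathbf u\perp\operatorname{span}\{\mathbf u_i\}$, or $\lambda>0$, $\mathbf w=\mathbf 0$, and $\mathbf u=\sum_{i\in\mathbb A}c_i\mathbf u_i$ over some support $\mathbb A$ with $c_i=\lambda/\lambda_i$ and $\lambda_i c_i=\lambda$ as in (\ref{lmdici}). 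In particular, a stationary point with $|\mathbb A|=1$ is forced by $\|\mathbf u\|=1$ to be some $\mathbf u_{i_0}$.

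For the ``if'' direction, at $\mathbf u=\mathbf u_{i_0}$ (so $\mathbb A=\{i_0\}$, $\lambda=\lambda_{i_0}$) one has $\mathcal T\mathbf u^{m-2}=\mathcal T\mathbf u_{i_0}=\lambda_{i_0}\mathbf u_{i_0}\mathbf u_{i_0}^{\mathrm T}$, so by (\ref{hessian_matrix}) $\mathbf H(\mathbf u_{i_0})=2\lambda_{i_0}\mathbf u_{i_0}\mathbf u_{i_0}^{\mathrm T}-\lambda_{i_0}\mathbf I_n$. For any $\mathbf w$ in the tangent space $\mathbb V=\operatorname{Null}[(\triangledown g)^{\mathrm T}]=\{\mathbf w:\mathbf u_{i_0}^{\mathrm T}\mathbf w=0\}$ this gives $\mathbf w^{\mathrm T}\mathbf H(\mathbf u_{i_0})\mathbf w=-\lambda_{i_0}\,\mathbf w^{\mathrm T}\mathbf w<0$ whenever $\mathbf w\neq\mathbf 0$, so the strict form of Theorem~\ref{second_order_necessary} shows $\mathbf u_{i_0}$ is a strict, hence isolated, local maximizer. (Equivalently, via Lemma~\ref{phequal}, $\mathbf M=-\lambda_{i_0}\mathbf P_{\mathbf u_{i_0}}^{\bot}$, whose nonzero eigenvalues all equal $-\lambda_{i_0}$, so the projected Hessian $\mathbf P$ is negative definite.)

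For the ``only if'' direction I would rule out every remaining stationary point by producing a feasible direction of strict ascent. If $\mathbf u=\mathbf 0$ or $\lambda=0$ (so $\mathbf u\perp\operatorname{span}\{\mathbf u_i\}$ and $\mathcal T\mathbf u^{3}=0$), then along the feasible arc $\mathbf v(t)=\sqrt{1-t^{2}}\,\mathbf u+t\,\mathbf u_{1}$ one gets $\mathcal T\mathbf v(t)^{3}=\lambda_{1}t^{3}>0$ for $t>0$, so $\mathbf u$ is not a local maximizer. If $\lambda>0$ and $k:=|\mathbb A|\ge 2$, the key computation is $\mathcal T\mathbf u^{m-2}=\mathcal T\mathbf u=\sum_{i\in\mathbb A}\lambda_i c_i\,\mathbf u_i\mathbf u_i^{\mathrm T}=\lambda\,\mathbf P_{\mathbb A}$, where $\mathbf P_{\mathbb A}:=\sum_{i\in\mathbb A}\mathbf u_i\mathbf u_i^{\mathrm T}$ is the orthogonal projector onto $\operatorname{span}\{\mathbf u_i:i\in\mathbb A\}$ (using $\lambda_i c_i=\lambda$); hence $\mathbf H(\mathbf u)=\lambda(2\mathbf P_{\mathbb A}-\mathbf I_n)$. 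Since $k\ge 2$, the subspace $\operatorname{span}\{\mathbf u_i:i\in\mathbb A\}\cap\mathbf u^{\bot}$ contains a unit vector $\mathbf w$, and then $\mathbf P_{\mathbb A}\mathbf w=\mathbf w$ gives $\mathbf w^{\mathrm T}\mathbf H(\mathbf u)\mathbf w=\lambda>0$; along the feasible arc $\mathbf v(t)=\cos t\,\mathbf u+\sin t\,\mathbf w$ one finds $\left.\tfrac{d^{2}}{dt^{2}}\mathcal T\mathbf v(t)^{3}\right|_{t=0}=3\lambda>0$ (and, via Lemma~\ref{phequal}, $\mathbf w$ exhibits a positive eigenvalue of $\mathbf P$, contradicting the second-order necessary condition). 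Thus $\mathbf u$ is not a local maximizer, so the isolated local maximizers are exactly $\mathbf u_{1},\dots,\mathbf u_{r}$.

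The computations are all short once the orthogonal structure is used, so the main obstacle is organizational rather than technical: one must pin down that ``stationary point'' means KKT for the $\le 1$ constraint, so that $\lambda\ge 0$ always and strict complementarity ($\mu>0$) holds whenever $\lambda>0$ — this is what makes the critical cone equal the full tangent space $\mathbf u^{\bot}$, so that Theorem~\ref{second_order_necessary} and Lemma~\ref{phequal} apply verbatim — and one must dispose of the degenerate families ($\mathbf u=\mathbf 0$ and the eigenvalue-$0$ eigenvectors) by the explicit perturbation above rather than by a Hessian test, since there the projected Hessian can be negative semidefinite without being definite. The single conceptual point is then the elementary fact that $2\mathbf P_{\mathbb A}-\mathbf I_n$ fails to be negative definite on $\mathbf u^{\bot}$ as soon as $|\mathbb A|\ge 2$, while it is negative definite there when $|\mathbb A|=1$.
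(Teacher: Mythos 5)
Your proposal is correct, but it is worth noting that the paper itself never proves Theorem \ref{hsuresult}: it cites \cite{hsu} for the proof and instead establishes the generalization Theorem \ref{localresult}, which treats the \emph{sphere}-constrained model (\ref{opti_ori}) for all $m\ge 3$ by diagonalizing the projected Hessian $\mathbf M=\mathbf P^{\bot}_{\mathbf u}\mathbf H(\mathbf u)\mathbf P^{\bot}_{\mathbf u}$ in the orthonormal basis $\tilde{\mathbf U}$ and reading off the eigenvalue list $-\lambda$ ($n-k$ times), $(m-2)\lambda$ ($k-1$ times) and $0$. Your route is genuinely different in two respects, and both differences buy something. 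First, you work with the statement as actually quoted, i.e.\ the \emph{ball} constraint $\mathbf u^{\mathrm T}\mathbf u\le 1$: your KKT analysis correctly produces, besides the unit-norm eigenvectors with $\lambda>0$ enumerated by Lemma \ref{result_combination}, the degenerate stationary points ($\mathbf u=\mathbf 0$, interior points, and when $r<n$ the whole family orthogonal to $\operatorname{span}\{\mathbf u_1,\dots,\mathbf u_r\}$ with $\lambda=0$), and you eliminate them as maximizers by an explicit feasible ascent arc rather than a Hessian test, which is necessary since the projected Hessian is only semidefinite there; none of this is covered by the equality-constrained Theorem \ref{localresult}. Second, for the surviving cases your tangent-space computation ($\mathbf w^{\mathrm T}\mathbf H(\mathbf u_{i_0})\mathbf w=-\lambda_{i_0}\Vert\mathbf w\Vert^{2}$ for $k=1$, and the unit $\mathbf w\in\operatorname{span}\{\mathbf u_i:i\in\mathbb A\}\cap\mathbf u^{\bot}$ with second derivative $3\lambda>0$ along the spherical arc for $k\ge 2$) is the $m=3$ specialization of the paper's inertia count, obtained more directly and without needing Lemma \ref{phequal}; conversely, the paper's argument is uniform in $m$ and yields the exact spectrum of $\mathbf M$ rather than a single curvature direction. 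One small point to tighten: ``strict, hence isolated'' is not a valid implication in general; isolation of $\mathbf u_{i_0}$ follows in your argument because you have shown that \emph{every} local maximizer of the ball problem must be a KKT point and that the only KKT points surviving your tests are the finitely many, mutually orthogonal $\mathbf u_1,\dots,\mathbf u_r$, so you should state that final step explicitly.
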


The  detailed proof  can   refer  to  \cite{hsu}.
However,  the  authors  only  consider  the  case  of   $ m=3$.   Now, we  are  intersted  in     what  is  the  result    for    the  cases  of   
$ m  \ge  3$.
In  addition,  Theorem  \ref{hsuresult}    only    answers   that  which  eigenvector  is  locally  maximized.
What is the local optimality    of  the   other eigenpairs?     
Here,   we   provide   a  more  generalized   results  for  this  issue. 
And  the  following  theorem  is  presented: 

\begin{theorem} 
	\label{localresult}
	Let $\mathcal S \in    T^{m}(\mathbb R^{n})   $     be     an   orthogonally  diagonalizable symmetric   tensor  as given in the form of   \ref{odt_sum}.   Consider  the  optimization  model   in  (\ref{opti_ori}), for     $\vert    \mathbb  A  \vert  =k ( 1  \le  k  \le  r)    $,  we  have 
	 
		\textbf{case 1:} when  $ k=1$,     $  (\lambda_{i},   \mathbf  u_{i} )  (i=1,2,\dots, r)$       is  the   isolated   local   maximum  solution  of   (\ref{opti_ori}).  
		
		\textbf{case 2:} when  $ 1 <  k  \le  r  < n$,    a  linear   combination of  $  (\lambda_{i},   \mathbf  u_{i} )     $  is  the  saddle  points  of  (\ref{opti_ori}).    
				
		\textbf{case 3:} when  $ k= n$,    a  linear   combination of  $  (\lambda_{i},   \mathbf  u_{i} ) $ ($i=1,2,\dots,n$)      is  the  only   one  isolated    local  mimimum  solution  of  (\ref{opti_ori}). 
	
\end{theorem}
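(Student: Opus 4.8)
The plan is to make the Hessian $\mathbf H(\mathbf u)$ of (\ref{hessian_matrix}) completely explicit at each eigenvector $\mathbf u=\mathbf U\mathbf c$ of (\ref{uUc}), then read off its signature on the tangent space of the sphere via Lemma \ref{phequal}, and finally match the three cases of Theorem \ref{localresult} against Theorem \ref{second_order_necessary}. First I would compute $\mathcal S\mathbf u^{m-2}$: expanding $\mathcal S=\sum_{i=1}^{r}\lambda_i\mathbf u_i^{\circ m}$ and using that the columns of $\mathbf U$ are orthonormal (so $\mathbf u_i^{\mathrm T}\mathbf u=c_i$), one gets $\mathcal S\mathbf u^{m-2}=\sum_{i=1}^{r}\lambda_i c_i^{m-2}\,\mathbf u_i\mathbf u_i^{\mathrm T}$; the terms with $i\notin\mathbb A$ vanish because $c_i=0$ and $m\ge 3$, while for $i\in\mathbb A$ the identity (\ref{lmdici}) gives $\lambda_i c_i^{m-2}=\lambda$. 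Hence
\[
\mathbf H(\mathbf u)=(m-1)\mathcal S\mathbf u^{m-2}-\lambda\mathbf I_n=\lambda\bigl[(m-1)\mathbf U_{\mathbb A}\mathbf U_{\mathbb A}^{\mathrm T}-\mathbf I_n\bigr],
\]
where $\mathbf U_{\mathbb A}\in\mathbb R^{n\times k}$ collects the columns $\mathbf u_i$, $i\in\mathbb A$. Since $\mathbf U_{\mathbb A}\mathbf U_{\mathbb A}^{\mathrm T}$ is the orthogonal projector onto $V_{\mathbb A}:=\mathrm{span}\{\mathbf u_i:i\in\mathbb A\}$, the symmetric matrix $\mathbf H(\mathbf u)$ has eigenvalue $\lambda(m-2)>0$ with multiplicity $k$ on $V_{\mathbb A}$ and eigenvalue $-\lambda<0$ with multiplicity $n-k$ on $V_{\mathbb A}^{\bot}$.

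Next, by Lemma \ref{phequal} it suffices to analyze $\mathbf M=\mathbf P_{\mathbf u}^{\bot}\mathbf H(\mathbf u)\mathbf P_{\mathbf u}^{\bot}$. The crucial observation is that $\mathbf u=\mathbf U_{\mathbb A}\mathbf c_{\mathbb A}\in V_{\mathbb A}$, so both $\mathbf H(\mathbf u)$ and $\mathbf P_{\mathbf u}^{\bot}=\mathbf I_n-\mathbf u\mathbf u^{\mathrm T}$ leave the orthogonal splitting $\mathbb R^n=V_{\mathbb A}\oplus V_{\mathbb A}^{\bot}$ invariant; hence $\mathbf M$ is block diagonal with respect to it. On $V_{\mathbb A}^{\bot}$ the projector is the identity and $\mathbf M=-\lambda\mathbf I_{n-k}$; on $V_{\mathbb A}$ one has $\mathbf H(\mathbf u)=\lambda(m-2)\mathbf I$, so $\mathbf M|_{V_{\mathbb A}}=\lambda(m-2)\,\mathbf P_{\mathbf u}^{\bot}|_{V_{\mathbb A}}$, which is $0$ along the line $\mathrm{span}\{\mathbf u\}$ and $\lambda(m-2)$ on its $(k-1)$-dimensional orthogonal complement inside $V_{\mathbb A}$. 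Discarding the structural zero eigenvalue (the direction $\mathbf v=\pm\mathbf u$ of Lemma \ref{phequal}), the projected Hessian $\mathbf P$ has eigenvalue $\lambda(m-2)>0$ with multiplicity $k-1$ and eigenvalue $-\lambda<0$ with multiplicity $n-k$. In particular $\mathbf P$ is never merely semidefinite, so the second-order test is always decisive.

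The three cases of Theorem \ref{localresult} now follow. If $k=1$, then $\mathbf P=-\lambda\mathbf I_{n-1}\prec 0$, and the strict part of Theorem \ref{second_order_necessary} shows that $\mathbf u_i$ is an isolated local maximizer. If $k=n$ (which forces $r=n$, $\mathbf U$ orthogonal, and $\mathbb A=\{1,\dots,n\}$, the unique index set of size $n$), then $\mathbf P=\lambda(m-2)\mathbf I_{n-1}\succ 0$; applying Theorem \ref{second_order_necessary} to $-\mathcal S\mathbf u^{m}$ shows $\mathbf u$ is an isolated local minimizer, and since every eigenpair with $k<n$ has a projected Hessian that is neither negative nor positive semidefinite, these are the only isolated local minimizers. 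Finally, if $1<k<n$ (which contains the range $1<k\le r<n$ of case~2), then $k-1\ge 1$ and $n-k\ge 1$, so $\mathbf P$ has both a positive and a negative eigenvalue, i.e.\ is indefinite; by the necessity of (\ref{second_order}) (for the maximum with $\mathcal S\mathbf u^{m}$ and for the minimum with $-\mathcal S\mathbf u^{m}$) the point $\mathbf u$ is neither a local maximizer nor a local minimizer, and, being a constrained critical point, it is a saddle point.

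The step I expect to be the main obstacle is the second one: controlling the projected Hessian without ever forming the QR factorization. This is exactly what Lemma \ref{phequal} is built to bypass, and the two facts that make the computation clean are (i) $\mathbf u\in V_{\mathbb A}$, which allows $\mathbf H(\mathbf u)$ and $\mathbf P_{\mathbf u}^{\bot}$ to be simultaneously block-diagonalized along $V_{\mathbb A}\oplus V_{\mathbb A}^{\bot}$, and (ii) the sharp value $\lambda_i c_i^{m-2}=\lambda$ from (\ref{lmdici}), which collapses $\mathcal S\mathbf u^{m-2}$ to a scalar multiple of a projector. A secondary point needing a short argument is the passage from ``$\mathbf P$ indefinite'' to ``$\mathbf u$ is a saddle point'': this uses the converse direction of the second-order conditions (a strict constrained local extremum must have a semidefinite reduced Hessian of the corresponding sign), or, equivalently, a direct second-order Taylor expansion of $\mathcal S\mathbf u^{m}$ along a geodesic of the sphere in an eigendirection of $\mathbf P$.
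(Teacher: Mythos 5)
Your proposal is correct and follows essentially the same route as the paper: an explicit computation of $\mathcal S\mathbf u^{m-2}$ at the eigenvector $\mathbf u=\mathbf U\mathbf c$ (you expand the rank-one sum directly, the paper uses mode products with the diagonal core, yielding the same $\lambda$-times-projector form), then Lemma \ref{phequal} to replace the QR-based projected Hessian by $\mathbf M=\mathbf P_{\mathbf u}^{\bot}\mathbf H(\mathbf u)\mathbf P_{\mathbf u}^{\bot}$, whose spectrum $\{-\lambda\ (n-k\ \text{times}),\ (m-2)\lambda\ (k-1\ \text{times}),\ 0\}$ matches (\ref{eigensign}), followed by the same three-case sign analysis. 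Your write-up is if anything slightly more careful than the paper on the logical direction needed in cases 2 and 3 (applying the second-order test to $-\mathcal S\mathbf u^{m}$ and invoking the necessary condition to rule out extrema at indefinite points), but this is a refinement of, not a departure from, the paper's argument.
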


	
	Before  proceeding   our  proof,  we  would  like  to   re-express  (\ref{uUc})  as  the   following  form: 
	\begin{equation}\label{uUc_bot}
	\mathbf u  =    \mathbf U   \mathbf c
	=
	\begin{bmatrix}
	\mathbf U 
	&   \mathbf U_{\bot}
	\end{bmatrix}
	\begin{bmatrix}
	\mathbf c  \\
	\mathbf 0_{n-r}
	\end{bmatrix}
	=
	\tilde {\mathbf U}   
	\tilde {\mathbf c}
	, 
	\end{equation} 
	where 
	$  \mathbf U_{\bot} = [\mathbf u_{r+1}, \mathbf u_{r+2}, \dots, \mathbf u_{n}] 
	 \in  \mathbb R^{n \times (n-r)} $   is  	 a  matrix   that    lies in the null space of  
	$ \mathbf U^{\mathrm {T}}$, i.e., 
	$ \mathbf U^{\mathrm {T}}
	\mathbf u_{j} =\mathbf 0_{r} $,
	and the  column   vectors   of  $  \mathbf U_{\bot} $  satisfy  
	$\mathbf u_{j}^{\mathrm {T}}
	\mathbf u_{j} =1 $, 
		$\mathbf u_{j}^{\mathrm {T}}
	\mathbf u_{k} =0 $ ($j \neq k$) for   $j,k=r+1, r+2, \dots, n$. 
	$\mathbf  0_{n-r}   \in  \mathbb R^{  (n-r)  \times  1  } $    is   a   vector  with  all  elements  are  equal  to  0.  
	$  \tilde {\mathbf U}   = 	
	\begin{bmatrix}
	\mathbf U 
	&   \mathbf U_{\bot}
	\end{bmatrix}
	=   
	[\mathbf u_{1}, \mathbf u_{2}, \dots, \mathbf u_{r}, \mathbf u_{r+1}, \mathbf u_{r+2}, \dots, \mathbf u_{n}]  \in  \mathbb R^{n \times n} $   
	is  an  orthogonal  matrix  that  satisfies 
	$  \tilde {\mathbf U}   \tilde {\mathbf U}^{\mathrm {T}} 
	=   \tilde {\mathbf U}^{\mathrm {T}} \tilde {\mathbf U}  
	=   \mathbf  I_{n}$. 
	$   \tilde {\mathbf c}
	= 	\begin{bmatrix}
	\mathbf c  \\
	\mathbf 0_{n-r}
	\end{bmatrix}
	= 	\begin{bmatrix}
	\mathbf c_{\mathbb A}  \\
	\mathbf 0_{n-k}
	\end{bmatrix}$.

\begin{proof}	
	Based  on   the  notation used  in   (\ref{sum_simple}),  and  by  utilizing  properties (\ref{tensormodemn})  (\ref{tensormodenn}),  we  have   
\begin{equation}
\begin{split}
		\mathcal  S  \mathbf  u^{m-2}
		&=
	( \mathcal D \times_{1} \mathbf U 
	\times_{2} \mathbf U
	\times_{3} 
	\dots 
	\times_{m} \mathbf U)
	\times_{3} (\mathbf  U \mathbf   c)^{\mathrm T} 
	\times_{4} (\mathbf  U \mathbf   c)^{\mathrm T}
	\times_{5} 
	\dots 
	\times_{m} (\mathbf  U \mathbf   c)^{\mathrm T}	
 \\
	&
	=	  \mathcal D
	 \times_{1} \mathbf U 
	\times_{2} \mathbf U 
	\times_{3} ( \mathbf   c^{\mathrm T}\mathbf  U^{\mathrm T} \mathbf  U ) 
	\times_{4} ( \mathbf   c^{\mathrm T}\mathbf  U^{\mathrm T} \mathbf  U ) 
	\dots 
	\times_{m} ( \mathbf   c^{\mathrm T}\mathbf  U^{\mathrm T} \mathbf  U ) 
	\\
	&
	=	  \mathcal D
	\times_{1} \mathbf U 
	\times_{2} \mathbf U 
	\times_{3} ( \mathbf   c^{\mathrm T} ) 
	\times_{4} ( \mathbf   c^{\mathrm T} ) 
	\dots 
	\times_{m} ( \mathbf   c^{\mathrm T} )
		\\
	&
	=	  [\mathcal D
	\times_{3} ( \mathbf   c^{\mathrm T} ) 
	\times_{4} ( \mathbf   c^{\mathrm T} ) 
	\dots 
	\times_{m} ( \mathbf   c^{\mathrm T} ) ]
		\times_{1} \mathbf U 
	\times_{2} \mathbf U 
	\\
	&
		=
		\mathbf  U 
		( \mathcal  D \mathbf   c^{m-2})
		\mathbf  U^{\mathrm T} 
	\\
	&	=
		\mathbf  U  
		\mathbf \Sigma    
		\mathbf  U^{\mathrm T} .
	\end{split}.
	\end{equation}
	where $\mathbf \Sigma$  is an
	$  r  \times  r$  diagonal  matrix, and 
	based on  (\ref{lmdici}),  it holds 
	\begin{equation}\label{a}
	\mathbf \Sigma_{ii} = 
	( \mathcal  D \mathbf   c^{m-2})_{ii} 
	=
	\begin{cases}
	\mathcal  D_{i,i,\dots,i} c_{i}^{m-2}  = 
	\lambda_{i}  c_{i}^{m-2}  =  
	\lambda,  \quad    i   \in  \mathbb  A      \\
	0,    \quad  \quad  \quad   \quad     \quad  \quad   \quad     \quad   \quad       \quad   \quad    \quad     i      \notin  \mathbb  A 
	\end{cases} .
	\end{equation}
	
	Considering  (\ref{uUc_bot}), it  can be  further  rewritten  as 
	\begin{equation}
	\begin{split}
	\mathcal  S  \mathbf  u^{m-2}
	=
	\mathbf  U  
	\mathbf \Sigma    
	\mathbf  U^{\mathrm T}
	= 
		\begin{bmatrix}
	\mathbf U 
	&   \mathbf U_{\bot}
	\end{bmatrix}
	\begin{bmatrix}
	\mathbf \Sigma    &   \mathbf 0  \\
	\mathbf 0   &  \mathbf 0 
	\end{bmatrix}
		\begin{bmatrix}
	\mathbf U 
	&   \mathbf U_{\bot}
	\end{bmatrix} ^{\mathrm T} 
	= 
	 \tilde { \mathbf  U}
	 \begin{bmatrix}
	 \mathbf \Sigma    &   \mathbf 0  \\
	 \mathbf 0   &  \mathbf 0 
	 \end{bmatrix}
	 \tilde { \mathbf  U} ^{\mathrm T}   
	\end{split}.
	\end{equation}
	
	Thus,   we  can  derive     
	\begin{align}\label{m1sum2}
	(m-1)
	\mathcal  S  \mathbf  u^{m-2}
	-  
	\lambda  \mathbf I_{n}
	&  = 
	(m-1)
\tilde { \mathbf  U}
\begin{bmatrix}
\mathbf \Sigma    &   \mathbf 0  \\
\mathbf 0   &  \mathbf 0 
\end{bmatrix}
\tilde { \mathbf  U} ^{\mathrm T} 
	-
	\lambda
	\tilde { \mathbf  U}  
	\tilde { \mathbf  U}^{\mathrm T} 
	\nonumber \\
&	=
	\tilde { \mathbf  U}
[ (m-1)	\begin{bmatrix}
	\mathbf \Sigma    &   \mathbf 0  \\
	\mathbf 0   &  \mathbf 0 
	\end{bmatrix}
	-
	\lambda 
	\mathbf  I_{n} ]
	\tilde { \mathbf  U} ^{\mathrm T} 	
	= 
\tilde { \mathbf  U}
	\hat { \mathbf  \Sigma} 
	\tilde { \mathbf  U} 
  ,
	\end{align}
	where the   elements  of   
	$   \hat { \mathbf  \Sigma}  $   satisfy 
	\begin{equation}\label{eigenclss}
	\hat {\mathbf  \Sigma}_{ii} =   
	 [ (m-1)
		\begin{bmatrix}
	\mathbf \Sigma    &   \mathbf 0  \\
	\mathbf 0   &  \mathbf 0 
	\end{bmatrix}  -\lambda \mathbf   I_{n}  ] _{ii} =
	\begin{cases}
	(m-2)\lambda  ,  \quad  \quad    i   \in  \mathbb A      \\
	-\lambda ,    \quad  \quad  \quad    \quad   \quad     i    \notin   \mathbb A  
	\end{cases}  .
	\end{equation}
Therefore,    it  can  be  further     divided  into  two  blocks,  which  can  be  expressed  as       
\begin{align}\label{m1sum22}
(m-1)
\mathcal  S  \mathbf  u^{m-2}
-  
\lambda  \mathbf I_{n}	
= 
\tilde { \mathbf  U}
\hat { \mathbf  \Sigma} 
\tilde { \mathbf  U} 	
=
\tilde { \mathbf  U} 
\begin{bmatrix}
{ \mathbf  \Sigma}_{\mathbb A} &   \mathbf 0  \\
\mathbf 0     & { \mathbf  \Sigma}_{\mathbb B}
\end{bmatrix}
\tilde { \mathbf  U}^{\mathrm T}   ,
\end{align}
where 
$  { \mathbf  \Sigma}_{\mathbb A}=
(m-2)\lambda  \mathbf  I_{k}  $,  
${ \mathbf  \Sigma}_{\mathbb B}
=-\lambda  \mathbf  I_{n-k}$, 
where  $   \mathbf  I_{k} $ and 
$  \mathbf  I_{n-k} $ is an    $ k \times k $, $ (n-k) \times  (n-k) $  identity  matrix.
	
Next, 	  we consider  
	$   \mathbf  P_{\mathbf  u}^{\bot}  $  in   a   similar  way.

	\begin{align}\label{pubot}
	\mathbf  P_{\mathbf  u}^{\bot}
&	=  \mathbf  I_{n}   -   
	\mathbf  u  \mathbf  u^{\mathrm T}
	=    \mathbf  I_{n}   - 
	 (  \tilde {\mathbf U}  	\tilde {\mathbf c}) 
	 (  \tilde {\mathbf U}   \tilde {\mathbf c}  )^{\mathrm T} 
	 =
	 \tilde {\mathbf U}   \tilde {\mathbf U}^{\mathrm {T}}
	 - (  \tilde {\mathbf U}  	\tilde {\mathbf c}) 
	 (  \tilde {\mathbf U}   \tilde {\mathbf c}  )^{\mathrm T} 
	=
\tilde {\mathbf U}    ( \mathbf  I_{n}  - 	\tilde {\mathbf c} 	\tilde {\mathbf c} ^{\mathrm T}  )  \tilde {\mathbf U}  ^{\mathrm T} 
	 \nonumber  \\
&	=
\tilde {\mathbf U}  
  ( \mathbf  I_{n}  - 	\begin{bmatrix}
 \mathbf c_{\mathbb  A} 
   \\
  \mathbf 0_{n-k}
  \end{bmatrix} 		\begin{bmatrix}
  \mathbf c_{\mathbb  A} 
  \\
  \mathbf 0_{n-k}
  \end{bmatrix} ^{\mathrm T}  )  \tilde {\mathbf U}  ^{\mathrm T} 
	=  \tilde {\mathbf U}
	\begin{bmatrix}
	\mathbf  I_{k}-  \mathbf c_{\mathbb A}  \mathbf   c_{\mathbb A} ^{\mathrm T}    &   \mathbf 0  \\
	\mathbf 0   & \mathbf  I_{n-k}  
	\end{bmatrix}
\tilde {\mathbf U}  ^{\mathrm T}   ,
	\end{align} 
	Then,  it is    derived    
	\begin{equation}\label{PH}
	\begin{split}
	&\mathbf  {M}  =
	\mathbf  P_{\mathbf  u}^{\bot} 
	\mathbf  H(\mathbf x)																			
	\mathbf  P_{\mathbf  u}^{\bot}
	 \\  
	&=
	\tilde {\mathbf U}
	\begin{bmatrix}
	\mathbf  I_{k}-  \mathbf c_{\mathbb A}  \mathbf   c_{\mathbb A} ^{\mathrm T}    &   \mathbf 0  \\
	\mathbf 0   &  \mathbf  I_{n-k} 
	\end{bmatrix}
	\hat { \mathbf  \Sigma}    
	\begin{bmatrix}
	\mathbf  I_{k}-  \mathbf c_{\mathbb A}  \mathbf   c_{\mathbb A} ^{\mathrm T}    &   \mathbf 0  \\
	\mathbf 0   &  \mathbf  I_{n-k} 
	\end{bmatrix}
	\tilde {\mathbf U} ^{\mathrm T}                       \\
	&=  
	\tilde {\mathbf U}
	\begin{bmatrix}
	\mathbf  I_{k}-  \mathbf c_{\mathbb A}  \mathbf   c_{\mathbb A} ^{\mathrm T}    &   \mathbf 0  \\
	\mathbf 0   & \mathbf  I_{n-k}  
	\end{bmatrix}
	\begin{bmatrix}
	{ \mathbf  \Sigma}_{\mathbb A} &   \mathbf 0  \\
	\mathbf 0     & { \mathbf  \Sigma}_{\mathbb B}
	\end{bmatrix}
	\begin{bmatrix}
	\mathbf  I_{k}-  \mathbf c_{\mathbb  A}  \mathbf   c_{\mathbb  A} ^{\mathrm T}      &   \mathbf 0  \\
	\mathbf 0   &    \mathbf  I_{n-k}
	\end{bmatrix}
	\tilde {\mathbf U}  ^{\mathrm T} 
	\\                
	&=
\tilde {\mathbf U}
	\begin{bmatrix}
	(m-2) \lambda  ( \mathbf  I_{k}-   \mathbf c_{\mathbb  A}  \mathbf   c_{\mathbb  A} ^{\mathrm T}  )  &   \mathbf 0  \\
	\mathbf 0     &  { \mathbf  \Sigma}_{\mathbb B} 
	\end{bmatrix}
\tilde {\mathbf U} ^{\mathrm T} 
	\end{split}.
	\end{equation}

	Note  that  
	$    \mathbf  I_{k}-  \mathbf c_{\mathbb  A}  \mathbf   c_{\mathbb  A} ^{\mathrm T} 
	$ 
	is  a  projection  matrix  with  rank  $k-1$, and  it can be  rewritten as  the  form  of   
	$  \mathbf  Q_{\mathbb A}  \mathbf   \Gamma \mathbf  Q_{\mathbb  A}^{\mathrm T}   $,
	where 
	$  \mathbf  Q_{\mathbb  A} $  is    an  $ k \times  k$  orthogonal  matrix,  
	$ \mathbf   \Gamma= diag(1, 1,   \dots,  1,  0)   $   is   an  $ k \times  k$ diagonal  matrix,  where the  number  of  the eigenvalue  of  $ 1 $  is  $ k-1 $.  Then, 
	(\ref {PH})   can be  further  denoted as 
	\begin{equation}\label{phevd}
	\mathbf  {M}  =
\tilde {\mathbf U}
	\begin{bmatrix}
	\mathbf  Q_{\mathbb  A} &   \mathbf 0  \\
	\mathbf 0     &    \mathbf  Q_{\mathbb  B}  
	\end{bmatrix}
	\begin{bmatrix}
	{ \mathbf  \Sigma}_{\mathbb  A}   \mathbf   \Gamma        &   \mathbf 0  \\
	\mathbf 0     &       { \mathbf  \Sigma}_{\mathbb  B} 
	\end{bmatrix}
	\begin{bmatrix}
	\mathbf  Q_{\mathbb  A}^{\mathrm T} &   \mathbf 0  \\
	\mathbf 0     &   \mathbf  Q_{\mathbb  B}^{\mathrm T}  
	\end{bmatrix}
	\tilde {\mathbf U} ^{\mathrm T}   ,
	\end{equation}
	where  $  \mathbf  Q_{\mathbb  B} $  is    an  
	$ (n- k) \times (n- k)$  orthogonal  matrix.
	Thus, (\ref{phevd})  is  the  eigen-decomposition form    of  the    matrix  $ \mathbf  {M} $,  where  
	$     \tilde {\mathbf U}
	\begin{bmatrix}
	\mathbf  Q_{\mathbb  A} &   \mathbf 0  \\
	\mathbf 0     &    \mathbf  Q_{\mathbb  B}  
	\end{bmatrix}  $  
	is  the  eigenvector  matrix,  and  
	$\begin{bmatrix}
	{ \mathbf  \Sigma}_{\mathbb  A}   \mathbf   \Gamma        &   \mathbf 0  \\
	\mathbf 0     &       { \mathbf  \Sigma}_{\mathbb  B} 
	\end{bmatrix}
	$ is  the  eigenvalue  matrix.
	Based  on  (\ref{eigenclss}),    the  $n$  eigenvalues  are
	\begin{equation}\label{eigensign} 
	\underbrace{- \lambda, - \lambda, \dots,  - \lambda}_{n-k} ,
	\underbrace{
		(m-2) \lambda, (m-2) \lambda, \dots,  (m-2) \lambda}_{k-1} , 
	0. 
	\end{equation}
	
	Then,  we  seperately  consider  the sign  of   the  eigenvalues   in   (\ref{eigensign}) according  to  the  value of  $k$:
		
		\textbf{case 1:} when  $ k=1$,   the  eigenpair   falls into  one  of   
		$  (\lambda_{i},   \mathbf  u_{i} )  (i=1,2,\dots, r)$.  Since  $ m \ge 3$,  so    $n$  eigenvalues  are    $  -\lambda$ (the number is  $n-1$)  and  0,   which  are  all  non-positive.   This  implies  that    
		$ \mathbf  {M} \preceq 0 $.  Since  $ \lambda$ is    positive, we conclude   
		$ \mathbf  {P} \prec 0 $.  Thus,    $  (\lambda_{i},   \mathbf  u_{i} )  (i=1,2,\dots, r)$       is  the  isolated   local   maximum  solution  of   (\ref{opti_ori}).

		\textbf{case 2:} when  $ 1 <  k  <  r$,    eigenvalues  are 
		either negative   or  positive,  and  both     $ \mathbf  {M}  $   and  
		$ \mathbf  {P}  $  are  uncertain.  So  a  linear    combination   of  $  (\lambda_{i},   \mathbf  u_{i} )     $  is the  saddle  points  of  (\ref{opti_ori}).  
		
		\textbf{case 3:}
		when   $rank(\mathbf U ) =n$  and  $k=n$,  	
		all    eigenvalues  are  non-negative,  indicating    that     
		$ \mathbf  {M} \succeq 0 $  and   
		$ \mathbf  {P} \succ 0 $.    a  linear   combination of  $  (\lambda_{i},   \mathbf  u_{i} )  ($i=1,2,\dots,n$)    $  is  the  only   one  isolated    local  mimimum  solution  of  (\ref{opti_ori}).  
		
	The  proof  is   completed.
\end{proof}

\section{conclusion}
In this  paper, 
the  Z-eigenpairs  problem of  orthogonally  diagonalizable   symmetric    tensors
is   investigated.  
We  first  show  that   the  eigenpairs  can  be  expressed  in  an  unified  way  and  the  number  of  all  the  eigepairs  is  determined  by  the  order  and  rank  of  the  symmetric  tensor.   
Equipped  with    some    theoretical  analysis, we   eventually    provide  an  unified   proof  for  the  local  optimality of  the  eigenpairs, which  generalizes the  result  in \cite{hsu}.   

\bibliographystyle{elsarticle-num}
\bibliography{ODTREF}

\end{document}